\newcommand{\comment}[1]{}
\numberwithin{equation}{section}
\newtheorem{theorem}{Theorem}[section]
\newtheorem{proposition}[theorem]{Proposition}
\newtheorem{lemma}[theorem]{Lemma}
\newtheorem{corollary}[theorem]{Corollary}
\newtheorem{problem}[theorem]{Problem}
\newtheorem{remark}[theorem]{Remark}
\newcommand\supp{\qopname\relax o{supp}}
\newcommand\s{\sigma}
\newcommand\de{\delta}
\newcommand\f{\varphi}
\newcommand\KK{\mathcal{K}}
\newcommand{\NN}{\mathbb N}
\newcommand{\ZZ}{\mathbb Z}
\newcommand{\RR}{\mathbb R}
\newcommand{\CC}{\mathbb C}
\newcommand{\TT}{\mathbb T}
\newcommand\CF{\mathcal{C}}
\newcommand\MM{\mathcal{M}}
\newcommand\Om{\Omega}
\newcounter{rek}
\newcounter{rev}
\newcounter{res}
\newcounter{rec}
\newcommand{\beql}[1]{\begin{equation}\label{#1}}
\newcommand{\eeq}{\end{equation}}
\newcommand{\ve}{\varepsilon}
\newcommand{\FF}{{\mathcal F}}
\begin{document}

\title[Carath\'eodory-Fej\'er problems on LCA groups]
{Carath\'eodory-Fej\'er type extremal problems \\ on locally compact Abelian groups}
\author{S\'andor Krenedits and Szil\'ard Gy. R\'ev\'esz}\thanks{Supported in part by the Hungarian National Foundation for Scientific Research, Project \#'s K-81658 and K-100461.
Work done in the framework of the project ERC-AdG 228005.
}

\date{\today}

\address{
\newline \indent A. R\'enyi Institute of Mathematics
\newline \indent  Hungarian Academy of Sciences
\newline \indent Budapest, Re\'altanoda utca 13--15.
\newline \indent 1053 HUNGARY
\newline and
\newline \indent Department of Mathematics
\newline \indent Kuwait University
\newline \indent P.O. Box 5969 Safat -- 13060 KUWAIT
} \email{krenedits@t-online.hu, revesz.szilard@renyi.mta.hu}

\begin{abstract}
We consider the extremal problem of maximizing a point value
$|f(z)|$ at a given point $z\in G$ by some positive definite
and continuous function $f$ on a locally compact Abelian group
(LCA group) $G$, where for a given symmetric open set $\Omega \ni z$, $f$ vanishes outside $\Omega$ and is normalized by $f(0)=1$. 

This extremal problem was investigated in $\RR$ and $\RR^d$ and
for $\Omega$ a 0-symmetric convex body in a paper of Boas and
Kac in 1943. Arestov and Berdysheva extended the investigation
to $\TT^d$, where $\TT:=\RR/\ZZ$. Kolountzakis and R\'ev\'esz
gave a more general setting, considering arbitrary open sets,
in all the classical groups above. Also they observed, that
such extremal problems occurred in certain special cases and in
a different, but equivalent formulation already a century ago
in the work of Carath\'eodory and Fej\'er.

Moreover, following observations of Boas and Kac, Kolountzakis
and R\'ev\'esz showed how the general problem can be reduced to
equivalent discrete problems of "Carath\'eodory-Fej\'er type"
on $\ZZ$ or $\ZZ_m:=\ZZ/m\ZZ$. We extend their results
to arbitrary LCA groups.
\end{abstract}

\maketitle

\vskip1em \noindent{\small \textbf{Mathematics Subject
Classification (2000):} Primary 43A35, 43A70. \\ Secondary
42A05, 42A82. 
\\[1em]
\textbf{Keywords:} Carath\'eodory-Fej\'er extremal problem,
pointwise Tur\'an problem, locally compact Abelian groups,
abstract harmonic analysis, Haar measure, convolution of
functions and of measures, positive definite functions,
Bochner-Weil theorem, convolution square, Fej\'er-Riesz
theorem.}

\section{Introduction}\label{sec:intro}

In this work we consider the following fairly general problem.

\begin{problem}\label{pr:ptwgeneral}

Let $\Omega\subset G$ be a given set in the Abelian group $G$
and let $z\in \Omega$ be fixed. Consider a \emph{positive
definite function} $f: G\to \CC$ (or $\to \RR$), normalized to
have $f(0)=1$ and \emph{vanishing outside of $\Omega$}. How
large can then $|f(z)|$ be?

\end{problem}

The analogous problem of maximizing $\int_\Omega f$ under the
same hypothesis was recently well investigated by several
authors under the name of "Tur\'an's extremal problem",
although later it turned out that the problem was already
considered well before Tur\'an, see the detailed survey
\cite{LCATuran}. The problem in our focus, in turn, was also
investigated on various classical groups (the Euclidean space,
$\ZZ^d$ and $\TT^d$ being the most general ones) and was also
termed by some as "the pointwise Tur\'an problem", but the
paper \cite{kolountzakis:pointwise} traced it back to Boas and
Kac \cite{BK} in the 1940's and even to the work of
Carath\'eodory \cite{Cara} and Fej\'er \cite{Fej} \cite[I, page
869]{Fgesamm} as early as in the 1910's.

So based on historical reasons to be further explained below,
this problem we will term as the \emph{Carath\'eodory-Fej\'er
type extremal problem on $G$ for $z$ and $\Om$}. This clearly
requires some explanation, since Carath\'eodory and Fej\'er
worked on their extremal problem well before the notion of
positive definiteness was introduced at all.

Positive definite functions on $\RR$ were introduced by
Matthias in 1923 \cite{Mathias}.
For Abelian groups positive definite functions are defined analogously \cite[p. 17]{rudin:groups} by the property that
\begin{equation}\label{eq:posdefdfnd}
\forall n\in \NN, ~\forall x_1,\dots,x_n\in G, ~ \forall c_1,\dots,c_n\in \CC \qquad \qquad \sum_{j=1}^{n}\sum_{k=1}^{n} c_j \overline{c_k} f(x_j-x_k) \geq 0.
\end{equation}
In other words, positive definiteness of a real- or complex
valued function $f$ on $G$ means that for all $n$ and all
choice of $n$ group elements $x_1,\dots,x_n\in G$, the $n\times
n$ square matrix $[f(x_j-x_k)]_{j=1,\dots,n}^{k=1,\dots,n}$ is
a positive (semi-)definite matrix. We will use the notation $f
\gg 0$ for a short expression of the positive definiteness of a
function $f:G\to \CC$ or $G\to \RR$.

Perhaps the most well-known fact about positive definite
functions is the celebrated \emph{Bochner theorem}, later
extended to locally compact Abelian groups (LCA groups for
short) in several steps and in this generality termed as the
\emph{Bochner-Weil theorem}. This states that a continuous
function $f:G\to \CC$ on a LCA group $G$ is positive definite
if and only if on the dual group $\widehat{G}$ there is an
essentially unique (positive) Borel measure $d\mu(\gamma)$ such
that $f$ is the inverse Fourier transform of $d\mu$:
$f(x)=\int_{\widehat{G}} \gamma(x) d\mu(\gamma)$ ($\forall x\in
G$), see e.g. \cite[page 19]{rudin:groups}.

We will not need this general theorem in its full strength, but
only the special case of positive definite sequences, obtained
actually in Carath\'eodory's and Fej\'er's time, preceding the
introduction and general investigation of positive definite
functions.
\begin{theorem}[\bf Herglotz]\label{p:Herglotz} Let $\psi:\ZZ\to
\CC$ be a sequence on $\ZZ$. Then $\psi \gg 0$ (i.e $\psi$ is
positive definite) if and only if there exists a positive Borel
measure $\mu$ on $\TT$ such that
\begin{equation}\label{eq:Herglotz}
\psi(n)=\int_{\TT} e^{2\pi i n t} d\mu(t) \qquad (n\in \ZZ).
\end{equation}
Furthermore, in case $\supp \psi \subset [-N,N]$ we have
$\psi\gg 0$ if and only if
$T(t):=\check{\psi}(t)=\sum_{n=-N}^{N} \psi(n) e^{2\pi i n t}
\geq 0$ ($t\in\TT$), and then $d\mu(t)=T(-t)dt$ and
$\psi(n)=\int_\TT T(t) e^{-2\pi i n t} dt$.
\end{theorem}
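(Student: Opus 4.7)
The plan is to prove both directions of the general equivalence first, and then derive the finite-support statement as a consequence.

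For the easy direction, assume $\psi$ has the integral representation \eqref{eq:Herglotz}. Given $n$, $x_1,\dots,x_n \in \ZZ$ and $c_1,\dots,c_n \in \CC$, I would substitute the representation into the quadratic form and interchange the (finite) sum with the integral, yielding
\[
\sum_{j,k=1}^{n} c_j \overline{c_k}\, \psi(x_j-x_k) = \int_{\TT} \Bigl|\sum_{j=1}^{n} c_j\, e^{2\pi i x_j t}\Bigr|^2 d\mu(t) \geq 0,
\]
which gives $\psi \gg 0$ immediately.

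For the converse, I would use the classical Cesàro/Fejér averaging trick. Set
\[
\sigma_N(t) := \sum_{|n|<N} \Bigl(1-\tfrac{|n|}{N}\Bigr) \psi(n)\, e^{2\pi i n t}.
\]
Counting pairs $(j,k)\in\{0,\dots,N-1\}^2$ with $j-k=n$ gives the identity $N \sigma_N(t) = \sum_{j,k=0}^{N-1} \psi(j-k)\, e^{2\pi i (j-k) t}$, and the right side is of the form $\sum_{j,k} \psi(j-k) c_j \overline{c_k}$ with $c_j = e^{2\pi i j t}$, hence nonnegative by $\psi\gg0$. Thus $\sigma_N \geq 0$, and since $\int_\TT \sigma_N(t)\,dt = (1-0/N)\psi(0) = 1$, the measures $d\mu_N(t) := \sigma_N(t)\,dt$ are probability measures on $\TT$. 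By Banach--Alaoglu (Helly's selection theorem), some subsequence $\mu_{N_k}$ converges in the weak-$*$ topology to a positive Borel measure $\mu$ of total mass $1$. Testing against the continuous function $t\mapsto e^{2\pi i n t}$ and using $\int_\TT e^{2\pi i n t} \sigma_N(t)\,dt = (1-|n|/N)\psi(n)$ for $|n|<N$, passage to the limit yields $\psi(n) = \int_\TT e^{2\pi i n t}\, d\mu(t)$, as required. Uniqueness of $\mu$ follows from the injectivity of the Fourier transform on $M(\TT)$.

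For the finite-support refinement, the "if" part is immediate from the easy direction above applied to the measure $d\mu(t) := T(-t)\,dt$, provided $T \geq 0$; indeed, a change of variable $s=-t$ plus orthogonality of $\{e^{2\pi i n t}\}$ recovers $\int_\TT e^{2\pi i n t} T(-t)\,dt = \psi(n)$. For the "only if" part, apply the general theorem to obtain the representing measure $\mu$; its Fourier coefficients $\widehat{\mu}(n) = \psi(-n)$ vanish for $|n|>N$, so $\mu$ and the absolutely continuous measure $T(-t)\,dt$ have identical Fourier coefficients, whence by Fourier uniqueness $d\mu(t) = T(-t)\,dt$. Since $\mu \geq 0$, we conclude $T(-t) \geq 0$, and by symmetry of the argument (replacing $t$ by $-t$) $T \geq 0$ on $\TT$.

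The only step requiring care is the weak-$*$ convergence argument, since we need to justify that the limit measure reproduces $\psi$ \emph{on all of} $\ZZ$, including beyond any fixed $N$. This is handled by noting that each fixed $n$ eventually satisfies $|n|<N_k$, so the factor $(1-|n|/N_k)$ tends to $1$, and the identity passes to the limit. The injectivity of the Fourier transform on $M(\TT)$, used both for uniqueness of $\mu$ and for the finite-support refinement, is standard; alternatively one can verify $d\mu = T(-t)\,dt$ directly by expanding $T(-t)$ and integrating term by term against $e^{2\pi i n t}$.
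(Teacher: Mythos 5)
Your argument is correct in substance and is more self-contained than the paper's treatment: the paper checks only the easy direction directly from the definition \eqref{eq:posdefdfnd}, cites Herglotz (and Weil, via Rudin 1.4.3) for the existence of the representing measure, and dismisses the finitely supported case as ``a simple direct calculation''. You instead supply the classical proof in full -- Fej\'er means of the quadratic form, nonnegativity of $\sigma_N$ from positive definiteness with $c_j=e^{2\pi i j t}$, weak-$*$ (Helly/Banach--Alaoglu) selection of a limit measure, and then the finite-support refinement by identifying $\mu$ with $T(-t)\,dt$ through Fourier coefficients. That is a legitimate alternative route, and your refinement argument is exactly the direct calculation the paper alludes to.

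Two bookkeeping points need repair, one of them an actual sign error. First, you assert $\int_\TT \sigma_N(t)\,dt=1$; the theorem does not assume $\psi(0)=1$. All you need is that the total masses equal $\psi(0)$, hence are uniformly bounded, so weak-$*$ compactness still applies (and if $\psi(0)=0$ then $\psi\equiv 0$ by $|\psi(n)|\le\psi(0)$, so that case is trivial). Second, with your definition $\sigma_N(t)=\sum_{|n|<N}\bigl(1-\tfrac{|n|}{N}\bigr)\psi(n)e^{2\pi i n t}$, orthogonality gives $\int_\TT e^{2\pi i n t}\sigma_N(t)\,dt=\bigl(1-\tfrac{|n|}{N}\bigr)\psi(-n)$, not $\bigl(1-\tfrac{|n|}{N}\bigr)\psi(n)$; so the weak-$*$ limit of $\sigma_N(t)\,dt$ represents $\psi(-n)=\overline{\psi(n)}$ rather than $\psi(n)$, which for complex-valued $\psi$ is not the same thing. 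The cure is one line: take $d\mu_N(t):=\sigma_N(-t)\,dt$ (still a nonnegative density), or equivalently push the limit measure forward under $t\mapsto -t$. This reflection is precisely the one recorded in the statement itself, where the representing measure in the finite-support case is $T(-t)\,dt$ and not $T(t)\,dt$; your own computation $\int_\TT e^{2\pi i n t}T(-t)\,dt=\psi(n)$ shows you handle the orientation correctly there, so simply carry the same convention through the Fej\'er-mean construction. With that correction (and noting that $T(-t)\ge 0$ a.e.\ upgrades to everywhere by continuity), the proof is complete.
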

\begin{proof} The fact that any sequence represented in the
form of \eqref{eq:Herglotz} is necessarily positive definite
directly follows from the definition, as the reader can easily
check. (The same direct verification works in any LCA group,
too).

The existence of such a representation for an arbitrary
positive definite sequence on $\ZZ$ was first proved by
Herglotz in \cite{Herglotz}, preceding the later analogous
result of Bochner on $\RR$ and the development of the theory of
positive definite functions. The general proof on all LCA
groups belongs to Weil \cite{Weil}; for the proof and further
details see also \cite[1.4.3]{rudin:groups}.

The special case of finitely supported sequences can be fully
proved by a simple direct calculation.
\end{proof}

For further use we also introduce the extremal problems
\begin{align}\label{eq:CForigdef}
\MM(\Om)& :=\sup \{ a(1) ~:~ a:[1,N]\to \RR,~ N\in \NN,~ a(n)=0 ~ (\forall n\notin \Om),
\\ &\qquad\qquad \qquad\qquad T(t):=1+\sum_{n=1}^N a(n) \cos(2\pi nt)\geq 0 ~ (\forall t\in \TT)\}, \notag
\end{align}
which is called in \cite{kolountzakis:pointwise} the
\emph{Carath\'eodory-Fej\'er type trigonometric polynomial
problem} and
\begin{align}\label{CFfinitemdef}
\MM_m(\Om)& :=\sup \{ a(1) ~:~ a:\ZZ_m\to \RR,~ a(0)=1, ~a(n)=0 ~ (\forall n\notin \Om),
\\ &\qquad\qquad \qquad\qquad T\left(\frac{r}{m}\right):=\sum_{n \mod m} a(n) \cos\left(\frac{2\pi n r}{m}\right) \geq 0 ~ (\forall r \mod m) \}. \notag
\end{align}
which is termed in \cite{kolountzakis:pointwise} as the
\emph{Discretized Carath\'eodory-Fej\'er type extremal
problem}.

\begin{remark}\label{r:MmandM}
Obviously we have $\MM_m(\Om) \ge \MM(\Om)$, because the
restriction on the admissible class of positive definite
functions to be taken into account is lighter for the discrete
problem: we only need to have $T\left(\frac{r}{m}\right)\geq
0$, while for $\MM(\Om)$ the restriction is $T(t)\geq 0
~(\forall t\in \TT)$.
\end{remark}

Let us recall that Carath\'eodory and Fej\'er solved the
following extremal problem. Let $n\in \NN$ be fixed, and assume
that the 1-periodic trigonometric polynomial $T: \TT \to \RR$
of degree (at most) $n$ is \emph{nonnegative}. Under the
normalization that the constant term $a(0)=\int_\TT T=1$, what
is the possible maximum of $a(1)$ (solved already in
\cite{Cara}), and what are the respective extremal polynomials
(solved -- at all probability independently -- in \cite{Fej})?

Clearly the original Carath\'eodory-Fej\'er extremal problem is
a special case of the above $\MM(\Om)$ problem -- just take
$\Om:=[0,n]$, and observe that the possible odd part of $T$
(i.e. the sine series part of the trigonometric expansion) can
be neglected, for $a(1)$ is the same for a general $T(x)$ and
for $\frac12(T(x)+T(-x))$, the even part of $T$.

Let us now consider Problem \ref{pr:ptwgeneral} on $G:=\ZZ$,
with $\Om:=[-n,n]$, but with \emph{real valued} functions
(instead of general complex valued ones). Denote a function
from the admissible class (that is, a finite sequence of real
values on $[-n,n]$) as $\psi$ and assume that $\psi\gg 0$ on
$\ZZ$. As $\widehat{\ZZ}=\TT$, this is equivalent to say (in
view of Theorem \ref{p:Herglotz}) that the trigonometrical
polynomial $T(t):=\check{\psi}(t):=\sum_{k=-n}^n \psi(k)
\exp(2\pi i kt)$ is nonnegative, so also real. It follows that
$\overline{T}(t)=T(t)$, that is, $\psi(k)=\psi(-k)$. (Note that
positive definiteness of $\psi$ in itself implies that
$\psi(k)=\overline{\psi(-k)}$, as is seen from the general
introduction below, see \eqref{eq:fequalsftilde}, and so in
case $\psi$ is real-valued, we end up with the same relation).
Take now $a(0):=\psi(0),~ a(k):=2\psi(k)~(k=1,\dots,n)$. Then
the extremal problem translates to the $\MM([0,n])$ problem,
showing that \emph{for real valued functions} Problem
\ref{pr:ptwgeneral} on $\ZZ$ with $\Om:=[-n,n]$ is just the
same as the $\MM([0,n])$ problem (with a factor 2 between the
resulting extremal quantities). Below in Proposition
\ref{p:realpd} (ii) we show the easy fact that considering real
or complex valued functions does not matter (in this case of
sequences on $\ZZ$) -- therefore, we obtain that the original
Carath\'eodory-Fej\'er extremal problem is a (very) special
case of Problem \ref{pr:ptwgeneral}. This explains our
terminology.

\section{A short overview of basics about positive definite functions}\label{sec:posdefoverview}

Definition \ref{eq:posdefdfnd} has some immediate
consequences\footnote{These properties are basic and
well-known, see e.g. \cite[\S 1.4.1]{rudin:groups} We prove
them just for being self-contained, as they are easy.}, the
very first being that $f(0)\geq 0$ is nonnegative real (just
take $n:=1$, $c_1:=1$ and $x:=0$).

For any function $f:G\to \CC$ the \emph{converse}, or
\emph{reversed} function $\widetilde{f}$ (of $f$) is defined as
\begin{equation}\label{eq:ftildedef}
\widetilde{f}(x):=\overline{f(-x)}.
\end{equation}
E.g. for the characteristic function $\chi_A$ of a set $A$ we
have $\widetilde{\chi_A}=\chi_{-A}$ (where, as usual,
$-A:=\{-a~:~a\in A\}$), because $-x\in A$ if and only if $x\in
-A$.

Now let $f:G\to \CC$. Then in case $f$ is positive definite we necessarily have
\begin{equation}\label{eq:fequalsftilde}
f=\widetilde{f}.
\end{equation}

Indeed, take in the defining formula \eqref{eq:posdefdfnd} of
positive definiteness $x_1:=0$, $x_2:=x$ and $c_1:=c_2:=1$ and
also $c_1:=1$ and $c_2:=i$: then we get both $0\leq 2
f(0)+f(x)+f(-x)$ entailing that $f(x)+f(-x)$ is real, and also
that $0\leq 2f(0)+if(x)-if(-x)$ entailing that also
$if(x)-if(-x)$ is real. However, for the two complex numbers
$v:=f(x)$ and $w:=f(-x)$ one has both $v+w\in \RR$ and $i(v-w)
\in \RR$ if and only if  $v=\overline{w}$.

Next observe that for any positive definite function $f:G\to \CC$ and any given point $z\in G$
\begin{equation}\label{eq:posdefbdd}
|f(z)|\leq f(0),
\end{equation}
and so in particular if $f(0)=0$ then we also have $f\equiv 0$.
Indeed, let $z\in G$ be arbitrary: if $|f(z)|=0$, then we have
nothing to prove, and if $|f(z)|\ne 0$, let $c_1:=1$,
$c_2:=-\overline{f(z)}/|f(z)|$ and $x_1:=0$, $x_2:=z$ in
\eqref{eq:posdefdfnd}; then in view of \eqref{eq:fequalsftilde} $f(-z)=\overline{f(z)}$, which yields $0\leq 2f(0)+c_2 f(z)+\overline{c_2} f(-z)=2f(0)-2|f(z)|$ and
\eqref{eq:posdefbdd} obtains.

Therefore, all positive definite functions are bounded and
$\|f\|_\infty =f(0)$. That is an important property which makes
the analysis easier: in particular, we immediately see that the
answer to our extremal problem formulated in Problem
\ref{pr:ptwgeneral} cannot exceed 1.

Note also that similar elementary calculations show that
continuity of a positive definite function on a LCA group holds
if and only if the function is continuous at 0 c.f. \cite[(4),
p. 18]{rudin:groups} This we will not use, however.

For LCA groups, characters play a fundamental role, so it is of
relevance to mention that all characters $\gamma\in
\widehat{G}$ of a LCA group $G$ are positive definite. To see
this one only uses the multiplicativity of the characters to
get
$$
\sum_{j=1}^{n}\sum_{k=1}^{n} c_j \overline{c_k} \gamma(x_j-x_k)
= \sum_{j=1}^{n}\sum_{k=1}^{n} c_j\gamma(x_j) \overline{c_k \gamma(x_k)}
= \left|\sum_{j=1}^{n}c_j\gamma(x_j) \right|^2\geq 0
$$
for all choices of $n\in \NN$, $c_j\in \CC$ and $x_j\in G$
($j=1,\dots,n$). Similarly, for any $f\gg 0$ and character
$\gamma\in \widehat{G}$ also the product $f\gamma\gg 0$ since
for all choices of $n\in \NN$, $c_j\in \CC$ and $x_j\in G$
($j=1,\dots,n$) applying \eqref{eq:posdefdfnd} with
$a_j:=c_j{\gamma(x_j)}$ in place of $c_j$ ($j=1,\dots,n$) gives
\begin{align}\label{eq:charactertimesf}
\sum_{j=1}^{n}\sum_{k=1}^{n} c_j \overline{c_k} & \gamma(x_j-x_k) f(x_j-x_k)
= \sum_{j=1}^{n}\sum_{k=1}^{n} c_j\gamma(x_j) \overline{c_k \gamma(x_k)}f(x_j-x_k) \notag \\
&=\sum_{j=1}^{n}\sum_{k=1}^{n} a_j \overline{a_k} f(x_j-x_k)\geq 0 \quad \Big(a_j:=c_j\gamma(x_j) \quad (j=1,\dots,n)\Big).
\end{align}
It is equally easy to see directly from the definition that for
a positive definite function $f$ also $\overline{f} \gg 0$,
$f^{\sharp}(x):=f(-x)\gg 0$ and $\Re f\gg 0$, and that for
$f,g\gg0$ and $\alpha, \beta >0$ also $\alpha f + \beta g \gg
0$.

The perhaps most fundamental tool in topological groups is the
Haar measure, which is a non-negative regular and translation
invariant Borel measure $\mu_G$, existing and being unique up
to a positive constant factor in any LCA group, see \cite[p.
1,2]{rudin:groups} with a full proof.
As a direct consequence of uniqueness, we also have
$\mu_G(E)=\mu_G(-E)$ for all Borel measurable set $E$,
\cite[1.1.4]{rudin:groups}.

Following standard notations, in particular that of Rudin, we
simply write $dx, dy, dz$ etc. in place of $d\mu_G(x),
d\mu_G(y), d\mu_G(z)$ etc. Throughout the sequel we will
consider the convolution of functions with respect to the
Haar-measure $\mu_G$, that is
\begin{equation}\label{eq:convolutiondef}
(f\star g) (x) := \int_G  f(y)g(x-y) dy 
= \int_G  f(x+z)g(-z) dz
\end{equation}
defined for all functions $f,g \in L^1(\mu_G)$, or pairs of
functions $f\in L^p(\mu_G)$, $g\in L^q(\mu_G)$ with
$1/p+1/q=1$, see e.g. \cite[p. 3]{rudin:groups}. Convolution is
commutative and associative on any LCA group, see
\cite[1.6.1.Theorem]{rudin:groups}.

We will consider convolution of (bounded, complex valued,
regular Borel) measures and convolution of such measures and
functions as well. Rudin defines convolution of bounded regular
measures $\mu$ and $\lambda$ in \cite[1.3.1]{rudin:groups} with
reference to the product measure $\mu \times \lambda$ on
$G^2=G\times G$: to each Borel set $E\subset G$ the derived set
$E':=\{ (x,y)\in G^2~:~ x+y\in E\}$ is constructed and then
$\mu\star \lambda (E):= \mu \times \lambda (E')$. In particular
this also means that for $E\subset G$ a Borel set we have --
see \cite[(1) page 17]{rudin:groups} -- the formula
\begin{equation}\label{eq:convmeasuresofset}
\mu\star \lambda (E) = \int_G \mu (E-y) d\lambda(y).
\end{equation}
With this construction, convolution of any two (bounded,
regular, complex valued) measures is defined and yields another
such measure, moreover, convolution is commutative and
associative on any LCA group $G$, \cite[1.3.2
Theorem]{rudin:groups}. It is also easy to see, as is remarked
in \cite[(4), page 15]{rudin:groups}, that one can equivalently
define convolution of measures by the relation
\begin{equation}\label{eq:measureconv}
\int_G f d(\mu\star\lambda) := \int_G \int_G f(x+y) d\mu(x) d\lambda(y)
=\int_G \int_G f(x+y) d\lambda(y) d\mu(x) \,\, (f\in L^\infty(G)).
\end{equation}
Indeed, let the set of complex valued continuous functions with
compact support be denoted as $C_0(G)$: then, by the Riesz
representation theorem, the set $M(G)$ of all regular Borel bounded
(i.e. of finite total variation) measures is the topological
dual of $C_0(G)$ and thus can be written as $M(G)\cong
C_0^{*}(G)$. Now if $\mu, \lambda \in M(G)$, then their
convolution $\mu\star \lambda$ is defined according to
\eqref{eq:measureconv} for all $f\in C_0(G)$, which then
extends easily also to all $f\in L^\infty(G)$. Note that
\eqref{eq:convmeasuresofset} can be regarded as the special
case of $f=\chi_E$, for $\mu(E-y)=\int_G \chi_{E-y}(x)d\mu(x)
=\int_G \chi_{E}(y+x)d\mu(x)$.

Convolution of functions can then be regarded as a special case
of convolution of measures, for $f\star g$ is the density
function w.r.t. $\mu_G$ of the measure $\nu \star \s$ with $d
\nu:= f d\mu_G$ and $d\s := g d \mu_G$. Also convolutions of
measures with functions or functions with measures can be
obtained the same way. It is easy to see that for any $f\in
L^1(\mu_G)$ and $\nu\in M(G)$ we have the formula
\begin{equation}\label{eq:convmeasfunct}
f\star \nu (x) = \nu\star f (x) = \int_G f (x-y) d\nu(y).
\end{equation}
Another way to obtain this is to approximate $f$ by simple
functions and then use linearity and
\eqref{eq:convmeasuresofset} for each characteristic functions.
It is then immediate that the formula extends to $L^\infty(G)$,
too.

Also, analogously to \eqref{eq:ftildedef} the \emph{converse
measure} $\widetilde{\mu}(x):=\overline{\mu}(-x)$ (i.e.
$\widetilde{\mu}(E):=\overline{\mu}(-E)$) is defined to any
$\mu\in M(G)$. Then if $\phi \in C_0(G)$, then $ \int_G \phi
d(\widetilde{\mu\star\nu})= \int_G \phi(-x) d
\overline{(\mu\star\nu)}=\int_G\int_G\phi(-x-y)d
\overline{\mu}(x) d\overline{\nu}(y) =\int_G\int_G\phi(x+y)
d\widetilde{\mu}(x) d\widetilde{\nu}(y)=\int_G\phi
d(\widetilde{\nu} \star \widetilde{\mu})$, so that
$\widetilde{\mu\star\nu}=\widetilde{\mu}\star\widetilde{\nu}$.

For further use let us record here a few concrete formulae with
convolutions. By \eqref{eq:measureconv} 
for any $u,v\in G$ the formula $\de_u\star\de_v= \de_{u+v}$
holds true (where $\de_u \in M(G)$ denotes the Dirac measure
(unit point mass) at $u\in G$): for $ \int_G \phi
d(\de_u\star\de_v)= \int_G\int_G \phi(x+y)d \de_u(x) d\de_v(y)
= \phi(u+v)=\int_G \phi d\de_{u+v}$. Also, if $\phi \in
L^\infty(G)$ and $u\in G$, then we have in view of
\eqref{eq:convmeasfunct}
\begin{equation}\label{eq:deltastarphi}
\delta_u \star \phi (x) = \int_G \phi(x-y) d\de_u(y) = \phi(x-u).
\end{equation}
If for some Borel measurable $A$ we put $\phi:=\chi_A$, we
obtain similarly
\begin{equation}\label{eq:deltastarchi}
\de_u \star \chi_A (x) = \chi_A(x-u)=\chi_{A+u}(x).
\end{equation}

As \eqref{eq:convolutiondef} holds for all $L^1$ functions, it
also holds for $\chi_A, \chi_B$ with $A,B$ Borel measurable
sets with finite measure, yielding
\begin{equation}\label{eq:chistarchi}
\chi_A \star \chi_B (x) = \int_G \chi_A (y) \chi_B(x-y) dy 
= \int_G \chi_A  \chi_{(x-B)} d\mu_G = \int_G \chi_{A \cap (x-B)} d\mu_G = \mu_G (A \cap (x-B)).
\end{equation}
The same obtains also from calculating the measure convolution
$\mu_G|_A \star \mu_G|_B = \chi_A\mu_G \star \chi_B\mu_G$.

In this paper a particular role is played by the case of
$G=\ZZ$, where $\mu_\ZZ=\#$ is just the counting measure, and
thus all locally finite measures $\nu$ are absolutely
continuous and can as well be represented by their "density
function" $\f_\nu(k):=\nu(\{k\})$, and conversely, any function
$\f$ defines the respective measure $\nu_\f$ with $\nu_\f
(\{k\}):= \f(k)$, i.e. $\nu_\f=\sum_{k\in\ZZ} \f(k) \delta_k$;
moreover, clearly $L^1(\ZZ)=\ell^1\cong M(\ZZ)$. In particular
for $\f, \psi  \in \ell^1$, $\rho:=\f\star \psi$ is the density
function of the measure $\tau\in M(G)$ with $\tau=\nu\star \s$
and with $\nu:=\f_\nu d\#$, $\s:=\psi d\#$ being the measures
with density $\f, \psi$ respectively.

We will need the next well-known assertion (which we will use
only in the special case of compactly supported step functions,
however).

\begin{lemma}\label{l:convolutionsquare}
Let $f\in L^2(\mu_G)$ be arbitrary. Then the "convolution
square" of $f$ exists, moreover, it is a continuous positive
definite function, that is, $f\star \widetilde{f}\gg 0$ and
belongs to $C(G)$.
\end{lemma}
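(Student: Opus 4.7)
The idea is to unfold the definition and reduce positive definiteness to the nonnegativity of an integrated square, and then deduce continuity from the $L^2$-continuity of translation.

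First I would rewrite $(f\star\widetilde{f})(x)$: by \eqref{eq:convolutiondef} and the definition $\widetilde{f}(w)=\overline{f(-w)}$,
$$(f\star\widetilde{f})(x)=\int_G f(y)\overline{f(y-x)}\,dy = \inner{f}{\tau_x f}_{L^2(\mu_G)},$$
where $\tau_x f(y):=f(y-x)$; the integral converges by Cauchy--Schwarz since $f\in L^2(\mu_G)$ and $\|\tau_x f\|_2 = \|f\|_2$ by translation invariance of $\mu_G$.

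For positive definiteness, take arbitrary $n\in\NN$, $x_1,\dots,x_n\in G$ and $c_1,\dots,c_n\in\CC$. Then
$$\sum_{j,k=1}^{n} c_j\overline{c_k}(f\star\widetilde{f})(x_j-x_k) = \sum_{j,k=1}^{n} c_j\overline{c_k}\int_G f(y)\overline{f(y-x_j+x_k)}\,dy.$$
Applying translation invariance of $\mu_G$ via $y\mapsto y+x_j$ in each summand and interchanging the finite sum with the integral, the right-hand side becomes
$$\int_G \sum_{j,k=1}^n c_j\overline{c_k}\, f(y+x_j)\overline{f(y+x_k)}\,dy = \int_G \Bigl|\sum_{j=1}^n c_j f(y+x_j)\Bigr|^2 dy \geq 0,$$
yielding $f\star\widetilde{f}\gg 0$.

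For continuity I would invoke the classical fact that the translation map $x\mapsto \tau_x f$ is continuous from $G$ into $L^2(\mu_G)$ for every $f\in L^2(\mu_G)$; then by Cauchy--Schwarz
$$|(f\star\widetilde{f})(x)-(f\star\widetilde{f})(x')| = |\inner{f}{\tau_x f-\tau_{x'}f}_{L^2(\mu_G)}| \leq \|f\|_2\,\|\tau_x f-\tau_{x'}f\|_2 \to 0$$
as $x'\to x$. The main technical input is exactly this $L^2$-continuity of translation on an LCA group, which is the only step requiring care; however, since the authors indicate that the lemma will be applied only to compactly supported step functions, one can bypass the general result entirely and estimate $\|\tau_x f-\tau_{x'}f\|_2^2$ by the Haar measure of the (small, as $x'\to x$) symmetric differences of the finitely many translates of the characteristic sets comprising $f$.
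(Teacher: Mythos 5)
Your proposal is correct. Note, however, that the paper does not prove this lemma at all: its ``proof'' is a one-line citation to Rudin, \emph{Fourier analysis on groups}, \S 1.4.2(a). What you have written is essentially the standard argument found in that reference, carried out in full: the identity $(f\star\widetilde{f})(x)=\int_G f(y)\overline{f(y-x)}\,dy$, positive definiteness via the substitution $y\mapsto y+x_j$ and the resulting integral of $\bigl|\sum_j c_j f(y+x_j)\bigr|^2$, and continuity from the Cauchy--Schwarz bound $\|f\|_2\,\|\tau_x f-\tau_{x'}f\|_2$ together with the $L^2$-continuity of translation. All steps are sound: the finite sum/integral interchange is harmless since each integral converges absolutely, and the substitution uses only translation invariance of $\mu_G$. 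The trade-off is the obvious one: the citation keeps the paper short, while your version makes the lemma self-contained. Your closing remark is also apt --- since the paper only ever applies the lemma to compactly supported step functions, continuity of translation can be checked by hand via Haar measures of symmetric differences of translated sets --- though be aware that even this special case ultimately rests on the same regularity properties of the Haar measure that underlie the general translation-continuity theorem, so the saving is in generality of the statement invoked rather than in the depth of the tools used.
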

\begin{proof}
This can be found for LCA groups in \cite[\S 1.4.2(a)]{rudin:groups}.
\end{proof}

Although it is very useful when it holds, in general this
statement cannot be reversed. Even for classical Abelian
groups, it is a delicate question when a positive definite
continuous function has a "convolution root" in the above
sense. For a nice survey on the issue see e.g. \cite{EGR}. We
will however be satisfied with a very special case, where this
converse statement is classical.

\begin{lemma}\label{l:FR} (i) Let $\psi:\ZZ\to \CC$ be a finitely supported positive definite sequence. Then there exists another sequence $\theta:\ZZ\to \CC$, also finitely supported, such that $\theta*\widetilde{\theta}=\psi$. Moreover, if $\supp \psi \subset [-N,N]$, then we can take $\supp \theta \subset [0,N]$.

(ii) 
If $\psi:\ZZ_m\to\CC$, $\psi\gg 0$ on $\ZZ_m$, 
then there exists $\theta:\ZZ_m\to \CC$ with $\theta*\widetilde{\theta}=\psi$.
\end{lemma}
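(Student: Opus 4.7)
The plan is to reduce both parts to the elementary fact that nonnegative real scalars admit square roots, by passing to the Fourier side, where convolution becomes pointwise multiplication and $\theta \mapsto \widetilde{\theta}$ becomes complex conjugation. Part (i) rests on Herglotz together with the classical \emph{one-sided} Fej\'er--Riesz factorization; part (ii) amounts to plain discrete Fourier analysis on $\ZZ_m$.

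For part (i), by Theorem \ref{p:Herglotz} the hypothesis $\psi\gg 0$ with $\supp\psi\subset[-N,N]$ gives
\[
T(t) := \check{\psi}(t) = \sum_{n=-N}^{N} \psi(n)\, e^{2\pi i n t} \;\geq\; 0 \qquad (t\in\TT).
\]
The classical Fej\'er--Riesz theorem then produces an algebraic polynomial $Q(z)=\sum_{k=0}^{N}\theta(k)\, z^k$ of degree at most $N$ with $T(t)=|Q(e^{2\pi i t})|^2$. I would extend $\theta:\ZZ\to\CC$ by zero outside $[0,N]$ and check, directly from $\widetilde{\theta}(k)=\overline{\theta(-k)}$, that $\check{\widetilde{\theta}}(t)=\overline{\check{\theta}(t)}$; hence
\[
\check{(\theta*\widetilde{\theta})}(t)\;=\;\check{\theta}(t)\cdot\check{\widetilde{\theta}}(t)\;=\;\bigl|Q(e^{2\pi i t})\bigr|^2\;=\;T(t)\;=\;\check{\psi}(t),
\]
and uniqueness of Fourier coefficients of finitely supported sequences forces $\theta*\widetilde{\theta}=\psi$. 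The one-sided support $\supp\theta\subset[0,N]$ is built into the choice of $Q$.

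For part (ii), since $\ZZ_m$ is finite, Fourier analysis is entirely elementary: the DFT $\ft{\psi}(r) := \sum_{n \text{ mod } m}\psi(n)\, e^{-2\pi i n r/m}$ simultaneously diagonalizes every circulant matrix, in particular $[\psi(j-k)]_{j,k\, \text{mod}\, m}$, whose positive semidefiniteness (i.e.\ positive definiteness of $\psi$ on $\ZZ_m$) is therefore equivalent to $\ft{\psi}(r)\geq 0$ for every $r$ mod $m$. I would then set $\ft{\theta}(r):=\sqrt{\ft{\psi}(r)}$, define $\theta$ via inverse DFT, and verify exactly as in (i), using the analogue $\ft{\widetilde{\theta}}=\overline{\ft{\theta}}$ on $\ZZ_m$, that $\widehat{\theta*\widetilde{\theta}}(r)=|\ft{\theta}(r)|^2=\ft{\psi}(r)$; Fourier inversion on $\ZZ_m$ closes the argument.

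The only genuinely non-routine input is the \emph{one-sided} refinement used in (i): Herglotz by itself would merely yield a factor supported in $[-N,N]$, and the substance of the classical Fej\'er--Riesz theorem is precisely that the support can then be shifted into $[0,N]$. In (ii) everything is linear algebra of Hermitian circulant matrices together with the bookkeeping identity $\ft{\widetilde{\theta}}=\overline{\ft{\theta}}$, both of which are straightforward.
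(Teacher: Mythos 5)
Your proof is correct and follows essentially the same route as the paper: part (i) via Theorem \ref{p:Herglotz} plus the classical Fej\'er--Riesz factorization $T=|Q|^2$ with $Q$ supported in $[0,N]$, and part (ii) via the DFT on $\ZZ_m$, nonnegativity of $\ft{\psi}$, and taking pointwise square roots on the Fourier side. The only cosmetic difference is that the paper verifies $\theta\star\widetilde{\theta}=\psi$ in (ii) by a direct convolution computation (and allows arbitrary phases in $\ft{\theta}$), whereas you appeal to the convolution theorem and Fourier inversion, which is equivalent.
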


Note the slight loss of precision in (ii) -- it does not provide also localization, i.e. we cannot bound the support of $\theta$ in terms of a control of the support of $\psi$. This is natural, for the same finitely supported sequence can be positive definite on $\ZZ_m$ more easily than on $\ZZ$, as the equivalent restriction $T(2\pi n/m)\geq 0$ can be satisfied more easily than $T(t)\geq 0$ ($\forall t\in \TT$). However, here the support remains finite anyway, which is the only essential fact we need in our arguments below.
\begin{proof} [Proof of Part (i).]
Here we invoke the special case of Bochner's Theorem as
formulated in Theorem \ref{p:Herglotz} to get that
$T(t):=\check{\psi}(t)\geq 0$ ($\forall
t\in\TT=\widehat{\ZZ}$). Since $\psi$ is finitely supported,
$T$ is a $1$-periodic trigonometrical polynomial (with complex
coefficients $\psi(k)$).

Let $n$ stand for $\deg T$, so that $\supp \psi \subset [-N,N]$
translates to $n\leq N$. Write $T(t)$ in its trigonometric form
as $T(t)=\sum_{k=0}^N a_k \cos(2\pi kt) + b_k \sin(2\pi kt)$
with $a_k:=\psi(k)+\psi(-k)$ and $b_k=(\psi(k)-\psi(-k))/i$. A
glance at \eqref{eq:fequalsftilde} yields
$\psi(-k)=\overline{\psi(k)}$, so that then $a_k=2\Re \psi(k)$
and $b_k=2\Im \psi(k)$, whence in its trigonometrical form $T$
must have real coefficients $a_k, b_k \in \RR$ for all $0\leq k
\leq N$. This is of course obvious also from the usual
trigonometric version of the coefficient formulas:
$a_0=\int_\TT T(x)dx$, $b_0=0$, $a_k=2\int_\TT T(x)\cos(2\pi k
x) dx$, $b_k=2\int_\TT T(x)\sin(2\pi k x) dx$ ($k=1,\dots,N$).

Now the well-known classical theorem of L. Fej\'er and F.
Riesz, see \cite[Theorem 1.2.1]{Szego}, \cite{Fej}, or \cite[I,
page 845]{Fgesamm}, applies: there exists another
trigonometrical polynomial $P(t)$ of degree $n$ and with
complex coefficients -- more precisely, an algebraic polynomial
$p(z)$ of degree $n$ with $P(t)=p(e^{2\pi it})$ -- such that
$T(t)=|P(t)|^2$.

However, $|P|^2=P\cdot\overline{P}$ and by the well-known
properties of the Fourier transform, this means that there
exists a finitely supported $\theta:\ZZ\to \CC$, (the
coefficient sequence of $P$; whence actually it can be written
as $\theta:[0,n]\to \CC$, otherwise vanishing) such that
$\check{\theta}(t)=P(t)$ (and thus also
$\check{\widetilde{\theta}}=\overline{P}$) and
$\psi=\theta\star\widetilde{\theta}$. Note that $\supp \theta=[0,n]\subset [0,N]$, as needed.
\end{proof}
\begin{proof}[Proof of Part (ii).]
Consider $\widehat{\psi}(\nu):=\frac{1}{m} \sum_{j=0}^{m-1} \psi(j) \exp(-2\pi i \frac{j\nu}{m})$ which gives rise the representation $\psi(n)=\sum_{\nu\mod m} \widehat{\psi}(\nu) \exp(2\pi i \frac{n\nu}{m})$ (Fourier inversion on $\ZZ_m$).

First, observe that $\widehat{\psi}(\nu)\geq 0$ for all $\nu\in\ZZ_m$, for by definition \eqref{eq:posdefdfnd} of positive definiteness we must have with $x_j:=j\in\ZZ_m$ and $c_j:=\frac{1}{m}\exp(-2\pi i \frac{j\nu}{m})$ the inequality $0\leq \sum_{j\in\ZZ_m} \sum_{j'\in \ZZ_m} \psi(j-j') \frac{1}{m}\exp(-2\pi i \frac{j\nu}{m}) \frac{1}{m}\exp(2\pi i \frac{j'\nu}{m}) = \sum_{k\in\ZZ_m} \psi(k) \frac{1}{m} \exp(-2\pi i \frac{k\nu}{m}) =\widehat{\psi}(\nu)$.

Second, take $\widehat{\theta}(\nu):=\frac{1}{\sqrt{m}}\sqrt{\widehat{\psi}(n)} e^{i\f_\nu}$ ($\forall \nu\in\ZZ_m$), with arbitrary real $\f_\nu\in [-\pi,\pi)$. This gives rise to $\theta(n):=\sum_{\nu\in\ZZ_m} \widehat{\theta}(\nu) \exp(2\pi i \frac{n \nu}{m})$. Then we obtain
\begin{align*}
\theta\star\widetilde{\theta} (n) &:= \sum_{k\in \ZZ_m} \theta(k) \overline{\theta(k-n)} = \sum_{k\in \ZZ_m}  \sum_{\nu\in\ZZ_m} \widehat{\theta}(\nu) \exp(2\pi i \frac{k \nu}{m}) \sum_{\mu\in\ZZ_m} \overline{\widehat{\theta}(\mu)} \exp(2\pi i \frac{(n-k) \mu}{m})
\\&= \sum_{\nu\in\ZZ_m} \widehat{\theta}(\nu) \sum_{\mu\in\ZZ_m} \overline{\widehat{\theta}(\mu)} \exp(2\pi i \frac{n \mu}{m}) \sum_{k\in \ZZ_m} \exp(2\pi i \frac{k \nu}{m}) \exp(-2\pi i \frac{k \mu}{m})
\\&= \sum_{\nu\in\ZZ_m} |\widehat{\theta}(\nu)|^2 \exp(2\pi i \frac{n \nu}{m})~ m
= \sum_{\nu\in\ZZ_m} \widehat{\psi}(\nu) \exp(2\pi i \frac{n \nu}{m})=\psi(n).
\end{align*}
Clearly, here we have found a convolution squareroot $\theta$, but it is not guaranteed here that $\supp \theta \subset [0,N]$ or at least $[-N,N]$, even if $\supp \psi\subset [-N,N]$. On the other hand this can still suffice, as $\ZZ_m$, whence all supports, are a priori finite, hence compact.
\end{proof}

\section{Function classes and variants of the Carath\'eodory-Fej\'er type extremal problem}\label{sec:variants}

Already the above introductory discussion exposes the fact that
Problem \ref{pr:ptwgeneral} may have various interpretations
depending on how we define the exact class of positive definite
functions what we consider, and also on what topology we use on
$G$, if any (which determines what functions may be continuous,
Borel measurable, compactly supported, Haar summable, etc.).
Fixing the meaning of positive definiteness as in
\eqref{eq:posdefdfnd}, similarly to \cite{kolountzakis:groups},
in principle we may consider many different function classes
and corresponding extremal quantities. With respect to $f$
"living" in $\Omega$ only, three immediate possibilities are
that $f(x)=0~(\forall x\notin \Omega)$, that $\supp f \subset
\Omega$ and that $\supp f \Subset \Omega$ (the latter notation
standing for compact inclusion). For "nicety" of the function
$f$ one may combine conditions of belonging to $C(G)$
(continuous functions), $L^1(G)$ (summable functions)$,
L^1_{\rm loc}(G)$ (locally summable functions) etc.

In case of the analogous "Tur\'an problem" one maximizes the
integral $\int_G f d\mu_G$ rather than just a fixed point value
$|f(z)|$. In this question considerations of various classes
are more delicate, and although several formulations were shown
to be equivalent, see \cite[Theorem 1]{kolountzakis:groups},
the authors call attention to cases of deviation as well. In
the Carath\'eodory-Fej\'er extremal problem, however, we will
find that the solution is largely indifferent to any choice of
these classes, a somewhat unexpected corollary of our general
approach. So instead of formally introducing all kind of
function classes and corresponding extremal quantities, let us
restrict to the two extremal cases, that is the possibly widest
and smallest function classes, and define here only
\begin{eqnarray}
\label{Fjustpd} \FF_G^{\sharp}(\Om) &:=& \{f:G\to \CC~~:~~ f\gg 0, ~f(0)=1,~f(x)=0~\forall x\notin\Om\,\}~,
\\ \label{Fcontcompact} \FF^c_G(\Om) &:=& \{f:G\to \CC~~:~~ f\gg 0, ~f(0)=1,~ f\in C(G),~~\supp f
\Subset\Om~ \} \, .
\end{eqnarray}
Let us note, once again, that the first formulation is
absolutely free of any topological or measurability structure
of the group $G$. On the other hand, equipping $G$ with the
discrete topology the latter gives back a formulation close to
the former but with restricting $f$ to have finite support
only.

The respective "Carath\'eodory-Fej\'er constants" are then
\begin{equation}\label{CFconstants}
\CF_G^{\sharp}(\Om,z)~ := \sup \bigg\{|f(z)|\,:~ f \in \FF_G^{\sharp}(\Om) \bigg\},
\quad \CF_G^c(\Om,z)~ := \sup \bigg\{|f(z)|\,:~ f \in \FF^c_G(\Om) \bigg\}.
\end{equation}

In view of \eqref{eq:posdefbdd} giving that for $f\gg 0$
$\|f\|_\infty=f(0)$, the {\em trivial estimate} or
\emph{trivial (upper) bound} for the Carath\'eodory-Fej\'er
constants $\CF_G^{\sharp}(\Om,z)$ and $\CF^c_G (\Om,z)$ is thus
simply $f(0)=1$.

As for a lower estimation, in the most classical cases it is
easy to show that there exists a (real-valued) $f\in\FF^c_G(\Om)$
with $f(z)\geq 1/2$, so $\CF^c_G(\Om,z)\geq 1/2$. We will work
out this for the general case, too, in Proposition
\ref{p:lowerhalf} below, as later this may be instructive for
comprehending the proofs of our main results. However,
preceding it we discuss another issue.

By the above general definition, for $G=\ZZ$ and
$G=\ZZ_m:=\ZZ/m\ZZ$ the Carath\'eodory-Fej\'er constants
\eqref{CFconstants} with $z:=1$ -- and denoting by $H$ the fundamental set in place of $\Om$ in this case and writing $\FF_{\ZZ_m}(H):=\FF^\#_{\ZZ_m}(H)=\FF^c_{\ZZ_m}(H)$ -- are
\begin{align}\label{eq:CFOmZ}
\CF^{\#}(H)& :=\CF^{\#}_\ZZ(H,1):=\sup \{ |\f(1)|~:~ \f\in \FF_\ZZ^{\#}(H) \}
\notag\\& :=\sup \{ |\f(1)|~~:~~ \f: \ZZ\to \CC, ~ \f\gg 0,  ~\f(0)=1,~ \supp \f \subset H \},
\notag \\ \CF^c(H)& :=\CF^c_\ZZ(H,1):=\sup \{ |\f(1)|~:~ \f\in \FF^c_\ZZ(H) \}
\\& :=\sup \{ |\f(1)|~~:~~ \f: \ZZ\to \CC, ~ \f\gg 0, ~\f(0)=1,~ \supp \f \subset H, ~ \# \supp \f <\infty  \}, \notag \\
\CF_m(H)& :=\CF^{\#}_{\ZZ_m}(H,1)=\CF^c_{\ZZ_m}(H,1):=\sup \{ |\f(1)|~:~ \f\in \FF_{\ZZ_m}(H) \}
\notag \\& :=\sup \{ |\f(1)|~~:~~ \f: \ZZ_m\to \CC, ~ \f\gg 0, ~\f(0)=1,~ \supp \f \subset H \}.\notag
\end{align}
Similarly to discussion of various function classes, at this
point also discussion of the issue whether we consider
functions $f:G\to \CC$ or just real valued functions, occurs
naturally.

Note that in case of maximization of the integral $\int_\Omega
f$ in place of the single function value $|f(z)|$ (that is, in
case of the "Tur\'an problem") the paper
\cite{kolountzakis:groups} easily concludes that even in the
generality of LCA groups the restriction to real valued
functions does not change the extremal quantity. Indeed,
$S:=\supp f \Subset \Omega$ is always symmetric (for $f\gg 0$
implies $f=\widetilde{f}$) and so $\int_S f = \int_{(-S)}
\widetilde{f} = \int_S \overline{f}$, whence $\int_S f =\int_S
\Re f$, too.

However here, while extremalizing in various function classes
are generally easier to compare and remain equivalent, the
issue of real- or complex valued functions becomes more
interesting and in fact it splits in some cases while it
remains equivalent for others. In this preliminary section we
consider only the fundamental cases of $\ZZ$ and $\ZZ_m$ for
various $m\in \NN$. For a more concise notation first let us
write similarly to the complex valued case
\begin{align}\label{CFKKconstants}
\KK^{\#}_G(\Om,z):=\sup_{\f\in\FF_G^{\#\RR}(\Om)}|\f(z)|,
&\quad \KK_G^c(\Om,z):=\sup_{\f \in \FF_G^{c\RR}(\Om)} |\f(z)|,
\\
\KK^{\#}(H):=\KK^{\#}_{\ZZ}(H,1), \quad
\KK^{c}(H):=\KK^{c}_{\ZZ}(H,1), &\quad
\KK_m(H):=\KK_{\ZZ_m}(H,1):=\sup_{\f\in \FF_{\ZZ_m}^{\RR}(H)} |\f(1)|,\notag
\end{align}
where naturally we write for any group, (and so in particular
for $G=\ZZ$ and $G=\ZZ_m$)
$$
\FF_G^{\#\RR}(\Om):=\{\f: G\to \RR~:~\f\in \FF_G^{\#}(\Om)\},\qquad
\FF_G^{c\RR}(\Om):=\{\f: G\to \RR~:~\f\in \FF^c_G(\Om)\}.
$$
\begin{proposition}\label{p:realpd} We have the following.
\begin{enumerate}
\item[{\it (i)}] $\MM(H)=2 \KK^c(H)$ and for all $m\in \NN$ $\MM_m(H)=2 \KK_m(H)$.
\item[{\it (ii)}] We have $\KK^c(H)=\CF^c(H)$ and $\KK^{\#}(H)=\CF^{\#}(H)$. 
\item[{\it (iii)}] For all $m\in \NN$, $\cos(\pi/m) \CF_m(H) \leq \KK_m(H) \leq \CF_m(H)$.
\item[{\it (iv)}][\textbf{Ruzsa}] If $4 \leq m\in \NN$, then in general {\it (iii)} is the best possible estimate with both inequalities being attained for some symmetric subset $H\subset \ZZ_m$.
\item[{\it (v)}] If $m=2,3$, then for any admissible $H$ we must have $H=\ZZ_m$ and thus $\f(x)\equiv 1$ shows $\CF_m(H)=\KK_m(H)=1$.
\end{enumerate}
\end{proposition}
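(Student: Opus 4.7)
The proofs of parts (i)--(iii) rest on two elementary tools: Herglotz's theorem \ref{p:Herglotz}, which translates between finitely supported positive definite sequences and non-negative cosine polynomials, and the twin observations (cf.\ \eqref{eq:charactertimesf} and the paragraph following it) that $f\gamma \gg 0$ whenever $f \gg 0$ and $\gamma$ is a character, and that $\Re f \gg 0$ whenever $f \gg 0$.

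For part (i), given a finitely supported real $\f:\ZZ\to\RR$ with $\f(0)=1$ and $\f\gg 0$, the symmetry $\f(-k)=\f(k)$ (from $\f=\widetilde\f$ combined with real-valuedness) yields $T(t):=\check\f(t) = 1 + \sum_{n\ge 1} 2\f(n)\cos(2\pi n t)$, which by Herglotz is non-negative iff $\f\gg 0$. So the substitution $a(n):=2\f(n)$ for $n\ge 1$ sets up a bijection between $\MM(H)$-admissible polynomials and real $\KK^c(H)$-admissible sequences, with $a(1)=2\f(1)$. Using the character $(-1)^k$ we flip the sign of $\f(1)$ when needed (preserving everything else since $H$ is symmetric), so that $\sup \f(1) = \sup|\f(1)|$, giving $\MM(H) = 2\KK^c(H)$. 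The same dictionary works on $\ZZ_m$ with $\widehat{\ZZ_m}$ in place of $\TT$, yielding $\MM_m(H)=2\KK_m(H)$.

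For parts (ii) and (iii), the inequalities $\KK \le \CF$ are trivial (real functions form a subclass). The reverse direction of (ii) uses a direct character rotation: given any complex $\f \in \FF^\#_\ZZ(H)$ (resp.\ $\FF^c_\ZZ(H)$) with $\f(1) = ce^{i\theta}$, multiply by $\gamma(k):=e^{-ik\theta}$ to get $\f\gamma\gg 0$ with $(\f\gamma)(0)=1$ and $(\f\gamma)(1) = c > 0$; then $\psi:=\Re(\f\gamma) \gg 0$ is real-admissible (with $\supp\psi\subset \supp\f\subset H$ using symmetry of $H$) and $\psi(1) = c = |\f(1)|$, establishing $\CF^\#(H)\le\KK^\#(H)$ and $\CF^c(H)\le\KK^c(H)$. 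For (iii) the character rotation is constrained to the finite dual $\widehat{\ZZ_m} = \{\gamma_\nu(k) = e^{2\pi ik\nu/m}\}_{\nu\in\ZZ_m}$: the values $\gamma_\nu(1) = e^{2\pi i\nu/m}$ are equally spaced on the unit circle with gap $2\pi/m$, so for any $\theta$ one may pick $\nu$ with $\theta + 2\pi\nu/m \in [-\pi/m,\pi/m]$ modulo $2\pi$, giving $\Re\bigl((\f\gamma_\nu)(1)\bigr)\ge c\cos(\pi/m)$, whence $\psi:=\Re(\f\gamma_\nu)$ is real-admissible with $\psi(1)\ge c\cos(\pi/m)$. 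Taking supremum over $\f$ yields $\KK_m(H)\ge\cos(\pi/m)\,\CF_m(H)$.

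Finally, part (v) is immediate: in $\ZZ_2$ and $\ZZ_3$ the requirements that $H$ be symmetric and contain $0$ and $1$ force $H=\ZZ_m$ (since $-1=1$ in $\ZZ_2$ and $-1=2$ in $\ZZ_3$), whereupon $\f\equiv 1$ is positive definite (its finite Fourier transform on $\ZZ_m$ is $\delta_0$, which is non-negative) and realizes $|\f(1)|=1$, matching the trivial upper bound. Part (iv), the sharpness of (iii), is the substantive step, attributed to Ruzsa. The upper equality $\KK_m=\CF_m$ is easily attained whenever the $\CF_m$-extremizer has $\f(1)$ alignable to the positive real axis by a character of $\ZZ_m$ (e.g.\ for $H=\ZZ_m$ itself, $\f\equiv 1$ gives value $1$ on both sides). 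The lower equality $\KK_m=\cos(\pi/m)\,\CF_m$ requires, conversely, an explicit complex admissible $\f$ whose value $\f(1)=ce^{i\theta}$ sits exactly at the worst-case phase $\theta=\pi/m$ modulo $2\pi/m$, so that no character in $\widehat{\ZZ_m}$ can bring its real part any closer to $c$. The main obstacle I expect in (iv) is designing, for each $m\ge 4$, a symmetric $H\subset\ZZ_m$ and a complex positive definite witness tailored to force this worst-case phase; this is a genuine combinatorial construction driven by the congruence structure of $m$, whereas every other step above is a straightforward application of the character and real-part tricks.
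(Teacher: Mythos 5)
Your treatment of (i), (ii), (iii) and (v) is correct and is essentially the paper's own argument: the Herglotz dictionary $a(n)=2\f(n)$ between nonnegative cosine polynomials and real positive definite sequences gives (i), and the character-rotation-plus-real-part trick ($\f\gamma\gg 0$, $\Re(\f\gamma)\gg 0$, with $\supp$ preserved) gives (ii) and, with the rotation restricted to the $m$ equally spaced characters of $\ZZ_m$, the factor $\cos(\pi/m)$ in (iii); (v) is the same trivial observation as in the paper.

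Part (iv), however, is a genuine gap: you correctly identify that an explicit symmetric $H\subset\ZZ_m$ and a complex positive definite witness at the worst-case phase are needed, but you do not produce them, and this construction is the whole content of (iv). The paper (following Ruzsa) takes $H=\{-1,0,1\}\subset\ZZ_m$. For the real problem it shows $\KK_m(H)=1/2$: the upper bound comes from testing \eqref{eq:posdefdfnd} with $x_j:=j$, $c_j:=(-1)^j$ (giving $m-2m\psi(1)\ge 0$), and the lower bound from the convolution square of $\theta:=\tfrac{1}{\sqrt2}(\de_0+\de_1)$, which is the sequence $1/2,1,1/2$ on $H$. For the complex problem it takes $\psi(0)=1$, $\psi(1)=r e^{\pi i/m}$, $\psi(-1)=\overline{\psi(1)}$ when $m$ is even (and $\psi(1)=r\exp(2\pi i[m/2]/m)$ when $m$ is odd), and uses the elementary Bochner characterization on $\ZZ_m$ ($\psi\gg 0$ iff $\widehat\psi\ge 0$) to compute $\widehat\psi(n)=1+2r\cos((2n+1)\pi/m)$, which is nonnegative for all $n$ exactly when $r\le 1/(2\cos(\pi/m))$. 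Hence $\CF_m(H)\ge \cos^{-1}(\pi/m)\,\KK_m(H)$, which together with (iii) shows the lower inequality is attained; the upper equality is attained trivially for $H=\ZZ_m$, as you note. Without this (or some equivalent) explicit computation, the sharpness claim in (iv) remains unproved — there is no soft argument that forces the extremal phase to be unremovable; one must exhibit the witness and verify its positive definiteness by the finite Fourier criterion.
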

\begin{proof}
As regards {(i)}, $\MM(H)=2 \KK^c(H)$ is quite easy and
was already discussed in the final part of Section
\ref{sec:intro}. The analogous relation $\MM_m(H)=2
\KK_m(H)$ ($m\in \NN$) is seen the same way.

It remains to compare the respective extremal quantities for
the cases of real- and complex valued functions. The obvious
direction is that $\KK^c(H)\le  \CF^c(H)$, $\KK^{\#}(H)\le  \CF^{\#}(H)$; and also
$\KK_m(H) \leq \CF_m(H)$ for all $m\in \NN$.

For proving some estimate in the other direction, let now
$G=\ZZ_m$ or $\ZZ$ and $\psi\in \FF_G^c(H)$ be arbitrary. Let
further $\gamma_t\in \widehat{G}$ be the character belonging to
the parameter $t$, i.e. $\gamma_t(k):=\exp(2\pi i t k)$, where
$t\in \TT$ in case $G=\ZZ$, and $t:=j/m$ with $j\in\ZZ_m$ in
case $G=\ZZ_m$.

As said above, together with $\psi$, also $\psi\gamma_t\gg 0$
and even $\varphi:=\Re \{\psi \gamma_t\} \gg 0$--see \eqref{eq:charactertimesf} and around-- while belonging to the same function class $\FF^c_G(H)$, as we also have $\f(0)=\psi(0)$ and $\supp \f \subset \supp \psi=:S$. Thus
$\KK^c_G(H,1) \geq \sup_{\psi\in\FF^c_G(H), \gamma_t\in
\widehat{G}} \Re\{\psi(1)\gamma_t(1)\} \geq
\sup_{\psi\in\FF^c_G(H)} \min_{\alpha\in \TT} \\ \sup_{\gamma_t\in
\widehat{G}} \Re\{ |\psi(1)|e^{2\pi i \alpha} \gamma_t(1)\} =
\CF^c_G(H,1)\cdot \min_{\alpha\in \TT} \sup_{\gamma_t\in
\widehat{G}} \Re\{e^{2\pi i \alpha} \gamma_t(1)\}$.

With the choice of $t:=-\alpha$ this latter estimate gives for $G=\ZZ$ that $\KK^c(H) \geq \CF^c(H)$, and by a completely analogous computation with $\psi \in \FF_{\ZZ}^{\#}(H)$ we also find $\KK^{\#}(H) \geq \CF^{\#}(H)$. As the converse inequalities are obvious,
these furnish {(ii)}.

Furthermore, for $G=\ZZ_m$ we can always choose
$t:=-[m\alpha+1/2]/m$ and thus obtain $\KK_m(H) \geq
\min_{\alpha\in \TT} \cos\left(2\pi(\alpha-[m\alpha+1/2]/m)
\right)  \CF_m(H) = \cos(\pi/m) \CF_m(H)$, so also {(iii)}
obtains.

To find an example of equality $\KK_m(H)=\CF_m(H)$ is
trivial, as $H:=\ZZ_m$ suffices. To obtain the other extreme,
for $4\leq m\in \NN$ we use a construction communicated to us
by I. Z. Ruzsa. Namely, we take $H:=\{-1,0,1\}\subset \ZZ_m$,
compute the extremal quantity $\KK_m(H)$ and then
compare it to $\CF_m(H)$ as follows.

To start with, we prove $\KK_m(H)=1/2$ for an arbitrary
$m\geq 4$. First, for any positive definite real sequence
$\psi$ supported on $\{-1,0,1\}$ with $\psi(0)=1$, by
$\widetilde{\psi}=\psi$ we must have $\psi(-1)=\psi(1)$.
Second, take now $x_j:=j\quad (\mod m)$ and $c_j=(-1)^j
\quad(j=1,\dots,m)$. Then we will get from the definition
\eqref{eq:posdefdfnd} that $m-2m\psi(1)\geq 0$, so $\psi(1)\leq
1/2$. Third, the real sequence $1/2, 1, 1/2$ on $H$ is
positive definite according to Lemma \ref{l:convolutionsquare},
because it is the convolution square of the function
$\theta:\ZZ_m\to\RR$ defined as
$\theta(0):=\theta(1):=1/\sqrt{2}$ and $\theta(k):=0$ for all
$k \not\equiv 0,1 \mod m$.

Now let us find a lower estimate for the value of $\CF_m(H)$
for $m \geq 4$ \emph{even}. We consider the function
$\psi(0)=1$, $\psi(1)= r \exp(\pi i /m)$, where $r>0$ is a
parameter, and $\psi(-1)=\overline{\psi(1)}$, as is needed to
satisfy $\widetilde{\psi}=\psi$. We want $\psi\in
\FF_{\ZZ_m}(H)$ and $r$ maximal possible. Let us compute now
the Fourier transform $\widehat{\psi}(n)=\int_{\ZZ_m} \psi(k)
e^{2\pi i kn/m} d\mu_{\ZZ_m}(k)= \sum_{k \mod m} \psi(k)
e^{2\pi i kn/m}=1+r e^{\pi i (2n+1)/m} + r e^{-\pi i (2n+1)/m}=
1+2r\cos((2n+1)\pi /m)$. This remains nonnegative, for all $n
\mod m$ if and only if $r\leq 1/(2\cos(\pi/m))$; if $r$ equals
this bound, then for $n:=m/2~$ $\widehat{\psi}(m/2)= 0$. (Here
it is essential that $m$ is even!) So now we find that to keep
the Fourier transform, that is the scalar product with all
characters, nonnegative, it is necessary and sufficient that
$r\leq 1/(2\cos(\pi/m))$.

In fact it is a very special, trivial instance of the general
Bochner-Weil theorem that $\widehat{\psi}(n)\geq 0$ ($\forall n
\mod m$) is further equivalent to positive definiteness of the
sequence $\psi$ on $\ZZ_m$. For this particular case of the
general theorem let us note that characters are positive
definite, and so are their (finite) positive linear
combinations as said above, therefore also any function on
$\ZZ_m$ with nonnegative Fourier transform. To see the converse
statement, that is $\widehat{\psi} \geq 0$ if $\psi\gg 0$, we
can fix any $k \mod m$, take $n:=m$, $c_j:=e^{2\pi i jk/m}$,
$x_j:=j \mod m$ ($j=1,\dots,m$) and compute
$$
0\leq \sum_{j=1}^{m}\sum_{\ell=1}^{m} e^{2\pi i(j-\ell)k/m} \psi(j-\ell)
= m \sum_{a \mod m} e^{2\pi iak/m} \psi(a)=m \widehat{\psi}(k).
$$
So we thus find that $\CF_m(H)$ is at least the maximal $r$
in the above construction, which reaches $r=1/(2\cos(\pi/m))$.
It also follows that for $m$ even and at least 4,
$\CF_m(H)\geq \cos^{-1}(\pi/m) \KK_m(H)$.

Let now $m>4$ be \emph{odd}. For a similar construction as
above for even $m$, we now choose $\psi(0):=1$, $\psi(1):=r
\exp(2\pi i [m/2]/m)$ and consequently $\psi(-1):=r\exp(-2\pi i
[m/2]/m)$. Again, we use the Bochner characterization that
$\psi\gg 0$ on $\ZZ_m$ if (and only if) $\widehat{\psi}\geq 0$
on $\widehat{\ZZ_m}=\ZZ_m$. This means that for $k \mod m$ we
must have $0\leq \widehat{\psi}(k)=\sum_{\ell \mod m}
\psi(\ell) \exp(2\pi i k \ell/m)= 1 + r \exp(2\pi i (k
+[m/2])/m) +r \exp(-2\pi i (k +[m/2])/m)=1 + 2r \cos(2\pi(k
+[m/2])/m) = 1 - 2r \cos(\pi(2k-1)/m) $, which holds true for
all $k \mod m$ if and only if its minimum, with respect to $k$,
satisfies nonnegativity, that is, when $0\leq 1-2r \cos(\pi/m)$
and thus $r\leq 1/\{2\cos(\pi/m)\}$. This leads to exactly the
same estimate as before, that is, $\CF_m(H)\geq
\cos^{-1}(\pi/m) \KK_m(H)$. Therefore, {(iv)} obtains.

In view of $0,1\in H$ and $H$ being symmetric, for both $m=2$ and $m=3$ $H=\ZZ_m$ is clear, whence also {(v)} is obvious. The
Proposition is proved.
\end{proof}

Now we can formulate the already announced lower estimation
with the somewhat more precise form containing the same lower
estimate even with real functions.

\begin{proposition}\label{p:lowerhalf}
For any LCA group $G$, $z\in G$ and $0, \pm z\in \Om\subset G$
open set we have $\CF^c_G(\Om,z)\geq 1/2$, moreover, there exists
a \emph{real-valued} function $f\in \FF^{c\RR}_G(\Om)$ with
$f(z)\geq 1/2$.
\end{proposition}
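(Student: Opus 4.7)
The plan is to produce $f$ as a normalized continuous convolution square: set $f := (g \star \widetilde{g})/(g \star \widetilde{g})(0)$ with $g := \chi_A$ for a judicious Borel set $A$ of compact closure. By Lemma~\ref{l:convolutionsquare}, $g \star \widetilde{g}$ is continuous and positive definite, and it is real-valued since $g$ is; the normalization gives $f(0) = 1$. The support of $f$ lies in $\overline{A - A}$, so it will suffice to choose $A$ so that $\overline{A - A} \subset \Om$ (making $\supp f \Subset \Om$ since $\overline{A}$ is compact) and $\mu_G(A \cap (A + z)) \geq \tfrac12 \mu_G(A)$, which by \eqref{eq:chistarchi} translates into $f(z) \geq 1/2$.

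Mimicking the classical Euclidean construction, I would take $A := V \cup (V + z)$, where $V \ni 0$ is a symmetric open neighborhood of $0$ in $G$ with compact closure, chosen so that (a) $\overline{V - V}$, $\overline{(V - V) + z}$ and $\overline{(V - V) - z}$ all lie in $\Om$, and (b) $V \cap (V + z) = \emptyset$, equivalently $z \notin V - V$. Both conditions are achievable: (a) follows from local compactness of $G$, openness of $\Om$ at each of $0, \pm z$, and continuity of $(u, v) \mapsto u - v$ at $(0,0)$, so three candidate symmetric neighborhoods of $0$ with compact closure can be intersected; (b) is automatic after further shrinkage, since $G$ is Hausdorff, $z \neq 0$ (the degenerate case $z = 0$ is immediate from $f(0) = 1$), and the sets $V - V$ form a neighborhood basis at $0$. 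With such $V$ in hand, $\overline{A - A} \subset \overline{V - V} \cup \overline{(V - V) + z} \cup \overline{(V - V) - z} \subset \Om$.

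The verification then reduces to a direct measure computation. Disjointness of $V$ and $V + z$ gives $\mu_G(A) = 2 \mu_G(V)$. Expanding $A \cap (A + z)$ as the union of the four pairwise intersections $V \cap (V + z)$, $V \cap (V + 2z)$, $(V + z) \cap (V + z)$, $(V + z) \cap (V + 2z)$, the first and last are empty by (b) and the third equals $V + z$, so $\mu_G(A \cap (A + z)) \geq \mu_G(V + z) = \mu_G(V) = \tfrac12 \mu_G(A)$. Hence $f(z) \geq 1/2$, and $f$ is real, positive definite, continuous, normalized at $0$, and compactly supported in $\Om$, witnessing $f \in \FF^{c\RR}_G(\Om)$ and $\CF^c_G(\Om,z) \geq 1/2$. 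The only nontrivial obstacle is the simultaneous fulfillment of (a) and (b); this is the one point where the LCA hypotheses (local compactness together with Hausdorff separation of $z$ from $0$) enter, as opposed to being purely formal manipulations of convolutions.
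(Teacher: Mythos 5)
Your proof is correct and is essentially the paper's own argument: both take the convolution square of (the indicator structure of) $V\cup(V+z)$ for a small symmetric neighborhood $V$ of $0$ chosen by continuity of subtraction so that $(V-V)\cup((V-V)\pm z)$ stays compactly inside $\Om$, and then bound $f(z)/f(0)$ by a Haar-measure computation. The only cosmetic difference is your extra disjointness requirement $z\notin V-V$; the paper avoids it by convolving $\Phi:=\chi_V+\chi_{V+z}$ (a sum rather than an indicator of the union), which yields $F(z)/F(0)=\tfrac12+(\beta+\gamma)/(2\alpha+2\beta)\ge\tfrac12$ without any shrinking for disjointness.
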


\begin{proof}
Basically, we want to utilize the fact that the measure
$\nu:=2\delta_0 + \delta_z+\delta_{-z}$ is a positive definite
Borel measure. Instead of formally defining the notion of
positive definiteness of measures, let us remark here that
clearly $\nu=\sigma \star \widetilde{\sigma}$ with $\sigma :=
\de_0 + \de_z$ (and, as is easy to see, $\widetilde{\s} = \de_0
+ \de_{-z}$). From this starting point we then wish to
construct a positive definite, real-valued, continuous function
$F$, compactly supported within $\Om$, and with $F(z) \geq
\frac12 F(0)$.

As $0, \pm z \in \Om \subset G$ and $\Om$ is open, in the
locally compact group $G$ there exists an open set $U \ni 0, z,
-z$ with its compact closure $\overline{U} \Subset \Om$. Next
we take another open neighborhood $V$ of $0$ satisfying $V-V,
V-V-z, V-V+z \subset U$. Such a $V$ exists for all three
functions $(x,y)\to x-y$, $(x,y)\to x-y-z$, $(x,y)\to x -y+z$
are continuous from $G\times G$ to $G$ mapping $0$ to $0, -z,
z$, respectively, while all these images $0, \pm z$ lie in $U$.
(Or, saying it a bit differently: this is equivalent to $V-V
\subset U':=U\cap (U+z) \cap (U-z)$, which is still an open
neighborhood of $0$ and is thus such that there is $V$ with
$V\times V \to U'$ under $(x,y)\to x+y$.)

So formally with the characteristic function $\chi_V$ of $V$ we
now take $\Phi := \chi_V+\chi_{V+z}=\chi_V \star(\delta_0 +
\delta_z)$ and accordingly $\widetilde{\Phi}=
\widetilde{\chi_{V}} + \widetilde{\chi_{V+z}}
=\chi_{-V}+\chi_{-V-z}$, so that using \eqref{eq:chistarchi}
\begin{align*}
F(x):=\Phi\star\widetilde{\Phi}(x)= & (\chi_V+\chi_{V+z})\star(\chi_{-V}+\chi_{-V-z})(x)=
\\ = & \chi_V \star \chi_{-V}(x) + \chi_V \star \chi_{-V-z}(x) + \chi_{V+z}\star\chi_{-V}(x)+\chi_{V+z}\star\chi_{-V-z}(x)
\\ = & \mu_G(V\cap (x+V))+\mu_G(V\cap (x+z+V)) \\& +\mu_G((V+z)\cap (x+V))+\mu_G((V+z)\cap (x+z+V))
\\ = & 2\mu_G(V\cap (x+V))+\mu_G(V\cap (x-z+V))+\mu_G(V\cap (x+z+V)).
\end{align*}

By Lemma \ref{l:convolutionsquare}, $F\gg 0$, $F$ is
continuous, and obviously $\supp F \subset \supp \Phi + \supp
\widetilde{\Phi} = \overline{(V \cup (V+z))+((-V) \cup
(-V-z))}= \overline{(V-V)\cup (V-V-z) \cup (V-V+z)} \subset
\overline{U} \Subset \Om$ by construction, so $F\in C_0(G)$,
too and in fact $F/F(0) \in \FF^c_G(\Om)$. Moreover, $F$ is
real-valued, too.

Finally, denote $\alpha:=\mu_G(V)$, $\beta:=\mu_G(V\cap (z+V))$
and $\gamma:=\mu_G(V\cap (2z+V))$. Then $F(0)=2\mu_G(V) +
\mu_G(V\cap (V-z)) + \mu_G(V\cap(z+V))= 2\alpha + 2 \beta$, and
similarly $F(z)= 2\beta+\alpha + \gamma$. It follows that
$F(z)/F(0) =1/2 + (\beta + \gamma)/(2\alpha + 2 \beta) \geq
1/2$, as we wanted.
\end{proof}

Note that the construction also shows that if $o(z)=2$,
i.e. $2z=0$, then $\gamma=\alpha$ and $F(z)=F(0)=1$, i.e.
$\KK_G(\Om,z)=1$ taking into account the trivial estimate from
above, too.

We have noted in Proposition \ref{p:realpd} {(v)} that in
$\ZZ_2$, when $m=2$ (and thus in particular $o(1)=2$) and also in $\ZZ_3$, the
trivial choice of $f\equiv 1$ proves $\CF_{\ZZ_2}(H,z)=1$,
$\CF_{\ZZ_3}(H,z)=1$. Now we obtained also this in quite a
larger generality.

Next let us mention a continuity-type result.
\begin{proposition}\label{p:limiting} Let $H\subset \ZZ$ be a fixed symmetric \emph{finite} set containing 0 and 1. Then
\begin{equation}\label{eq:Cmlimit}
\lim_{m\to \infty} \KK_m(H) = \lim_{m\to \infty} \CF_m(H)= \CF^c(H).
\end{equation}
\end{proposition}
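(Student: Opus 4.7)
The plan is to squeeze $\KK_m(H)$ between two comparable quantities via Proposition~\ref{p:realpd}(iii), which provides $\cos(\pi/m)\,\CF_m(H) \le \KK_m(H) \le \CF_m(H)$. Since $\cos(\pi/m) \to 1$, both limits collapse to $\lim_{m\to\infty} \CF_m(H)$, so it suffices to prove $\lim_{m\to\infty} \CF_m(H) = \CF^c(H)$. The two inequalities will be handled separately.

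For $\liminf_m \CF_m(H) \ge \CF^c(H)$, I would fix $H \subset [-N,N]$ and any $\f \in \FF_\ZZ^c(H)$. For $m > 2N$ the set $H$ embeds injectively into $\ZZ_m$, so I transport $\f$ to $\tilde\f : \ZZ_m \to \CC$ by setting $\tilde\f(k \bmod m) := \f(k)$ for $k \in H$ and zero elsewhere. By Theorem~\ref{p:Herglotz}, positive definiteness of $\f$ on $\ZZ$ is equivalent to $T(t) := \sum_{k\in H} \f(k) e^{2\pi i k t} \ge 0$ for every $t \in \TT$; in particular $T(r/m) \ge 0$ for every $r \bmod m$, which by the computation inside the proof of Lemma~\ref{l:FR}(ii) is precisely positive definiteness of $\tilde\f$ on $\ZZ_m$. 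Since $\tilde\f(0) = 1$ and $\tilde\f(1) = \f(1)$, this gives $\CF_m(H) \ge |\f(1)|$, and taking the supremum over admissible $\f$ yields $\CF_m(H) \ge \CF^c(H)$ for every $m > 2N$.

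For $\limsup_m \CF_m(H) \le \CF^c(H)$, I would pick near-extremal $\f_m \in \FF_{\ZZ_m}(H)$ with $|\f_m(1)| \to \limsup_m \CF_m(H)$. Since $|\f_m(k)| \le \f_m(0) = 1$ on the finite set $H$ by~\eqref{eq:posdefbdd}, a Bolzano--Weierstrass extraction yields a subsequence $m_j \to \infty$ along which each $\f_{m_j}(k)$ converges to some $\f(k)$, with $\f(0) = 1$ and $|\f(1)| = \limsup_m \CF_m(H)$. I extend $\f$ by zero outside $H$ and form $T_{m_j}(t) := \sum_{k\in H}\f_{m_j}(k) e^{2\pi i k t}$ and $T(t) := \sum_{k\in H}\f(k) e^{2\pi i k t}$. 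The family $\{T_{m_j}\}$ shares the fixed finite spectrum $H$ with coefficients bounded by $1$, hence is uniformly Lipschitz, and $T_{m_j} \to T$ uniformly on $\TT$. Positive definiteness of $\f_{m_j}$ on $\ZZ_{m_j}$ gives $T_{m_j}(r/m_j) \ge 0$ for every $r \bmod m_j$; for any fixed $t \in \TT$, I choose $r_j$ with $|r_j/m_j - t| \le 1/m_j$ and conclude $T(t) = \lim_j T_{m_j}(r_j/m_j) \ge 0$ using uniform convergence and equicontinuity. Hence $\f \gg 0$ on $\ZZ$ by Theorem~\ref{p:Herglotz}, so $\f \in \FF_\ZZ^c(H)$ and $\limsup_m \CF_m(H) = |\f(1)| \le \CF^c(H)$.

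The only delicate point is the final passage from the discrete positivity $T_m(r/m) \ge 0$, valid only on the lattice $\{r/m : r \bmod m\} \subset \TT$, to full positivity $T(t) \ge 0$ on all of $\TT$. Both ingredients needed there — the density of the evaluation nodes $r/m$ in $\TT$ and the uniform equicontinuity of the approximating trigonometric polynomials — are available precisely because $H$ is a fixed finite set, bounding the spectrum of $T_m$ uniformly in $m$.
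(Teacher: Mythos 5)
Your proof is correct and follows essentially the same route as the paper: one inequality via the embedding of $\FF^c_\ZZ(H)$ into $\FF_{\ZZ_m}(H)$ for $m>2\max H$, the other via a Bolzano--Weierstrass extraction combined with the density of the nodes $r/m$ in $\TT$ and the uniform Lipschitz bound coming from the fixed finite spectrum $H$, exactly as in the paper. The only differences are organizational: you argue directly with the complex classes and deduce the $\KK_m$ part from the squeeze in Proposition~\ref{p:realpd}(iii), whereas the paper first proves $\lim_{m\to\infty}\KK_m(H)=\KK^c(H)$ in the real setting and then invokes Proposition~\ref{p:realpd}(ii) and (iii); also, the fact you need in the $\liminf$ step (nonnegative discrete Fourier transform implies positive definiteness on $\ZZ_m$) is the converse of the computation in the proof of Lemma~\ref{l:FR}(ii) and is actually justified in the proof of Proposition~\ref{p:realpd}(iv), but it is available in the paper and elementary.
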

\begin{proof}
Consider first only the statement that $\lim_{m\to \infty}
\KK_m(H) = \KK^c(H)$, that is, restrict to
real-valued positive definite functions only. Note that even
the existence of the limit must be proved.

Since we deal with $m\to \infty$, we can assume $m>2\max H$.
Then obviously $\FF_{\ZZ}^{c\RR}(H) \subset \FF_{\ZZ_m}^{\RR}(H)$, hence $\KK^{c}(H) \leq \KK_m(H)$, as was remarked for the cosine formulation
already in Remark \ref{r:MmandM}. Whence $\KK^c(H) \leq
\liminf_{m\to\infty} \KK_m(H)$.

For an estimate from the other direction,let $\ve >0$ be
arbitrarily fixed,  and let $\f_m\in \FF_{\ZZ_m}^{\RR}(H)$ be such that $\f_m(1)\geq (1-\ve)\KK_m(H)$. We can then define the corresponding extension $\psi_m :\ZZ \to \RR$ with $\psi_m|_H=\f_m|_H$ and $\psi_m|_{\ZZ\setminus H}=0$. Then $\check{\psi}_m(k/m):=\sum_{j\in H}
\f_m(j) e^{2\pi i j k/m} \geq 0$ ($k \in \ZZ$) (even if
positive definiteness of $\f_m$ on $\ZZ_m$ does not imply
$\check{\psi}_m(t)\geq 0$ for all $t\in \TT$).

Now first we select a subsequence $(m_\ell)$ of the indices
with $\psi_{m_\ell}(1) \to \limsup_{m\to \infty}
\KK_m(H)$. By the Bolzano-Weierstrass Theorem we can
select a further subsequence $(m_{\ell_n}) \subset (m_\ell)$ --
which for simplicity we shall denote as $(m_n)$ from now on --
such that the function sequence $(\psi_{m_n})$ converges:
$\psi_{m_n}(j)\to \psi(j)$ for every $j\in H$ with some
finite value $\psi(j)$. (Here for the application of the
Bolzano-Weierstrass Theorem it is essential that
$\# H<\infty$, and also that by positive definiteness and normalization of $\f_m\in \FF_{\ZZ_m}^{\RR}(H)$ we necessarily have $|\psi_m(j)|=|\f_m(j)|\leq \f_m(0)=1$.)
Note that the limit values are also even ($\psi(-j)=\psi(j)$),
as by positive definiteness and having real values this
property holds for all $\f_m$ by \eqref{eq:fequalsftilde}.

Next let $\eta>0$ be arbitrary, and assume that for $n>N$ we
already have $|\psi_{m_n}(j)-\psi(j)|<\eta$. For an arbitrary
$t\in\TT$ let us choose a suitable $k_n \in \ZZ$ with
$|k_n/m_n-t|<1/m_n$: then we find
$$
\check{\psi}(t) =\sum_{j\in H} \psi(j) e^{2\pi i j t} \geq
\sum_{j\in H} \psi_{m_n}(j) e^{2\pi i j t} - \# H \eta \geq
\check{\psi}_{m_n}\left(\frac{k_n}{m_n}\right) - \# H
\frac{2\pi}{m_n} - \# H \eta
$$
because $|e^{2\pi it} - e^{2\pi is} |=|e^{2\pi i(t-s)}-1|= 2|\sin(\pi(t-s))|\le 2 \pi |t-s|$. However, $\f_m \gg 0$ (on $\ZZ_m)$ implies $\check{\psi_{m_n}}\left(\frac{k_n}{m_n}\right)=\check{\f_{m_n}}\left(\frac{k_n}{m_n}\right) \ge 0$, and thus we are led to
$$
\check{\psi}(t) \geq - \# H  \left( \frac{2\pi}{m_n} + \eta \right).
$$
Letting $n\to \infty$ and noting that $\eta>0$ was arbitrary
yields $\check{\psi}(t) \geq 0$, which shows $\psi \gg 0$ on
$\ZZ$ in view of Theorem \ref{p:Herglotz}. Furthermore, clearly
$\psi(0)=1$ and $\supp \psi \subset H$, hence $\psi\in
\FF_{\ZZ}^{c\RR}(H)$, while by construction $\psi (1) =
\lim_{n\to \infty} \psi_{m_n}(1) \ge (1-\ve) \limsup_{m\to
\infty} \KK_m(H)$ for any $\ve>0$.

So it follows that $\KK^{c}(H) \geq \limsup_{m\to \infty}
\KK_m(H) \geq \liminf_{m\to \infty} \KK_m(H)
\geq  \KK^{c}(H)$ (as recorded in the very first part),
furnishing $\lim_{m\to \infty} \KK_m(H) =
\KK^{c}(H)$. Note that according to Proposition
\ref{p:realpd} (ii), $\KK^{c}(H)=\CF^c(H)$. However, the
positive sequences $\KK_m(H)$ and $\CF_m(H)$ must be
equivalent regarding convergence in view of Proposition
\ref{p:realpd} (iii), so the first, and hence also the last
equality of \eqref{eq:Cmlimit} holds true.
\end{proof}

\begin{proposition}\label{p:finiteinfinite} We have
\begin{equation}\label{eq:finiteinfinite}
\CF^{\#}(H)=\CF^c(H).
\end{equation}
\end{proposition}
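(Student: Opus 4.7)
The direction $\CF^c(H)\leq \CF^{\#}(H)$ is immediate from the definitions, since every $\f\in\FF^c_\ZZ(H)$ is also admissible for $\CF^{\#}(H)$ (the finite-support constraint is dropped). So the content of the statement lies in showing that every $\f\in\FF^{\#}_\ZZ(H)$ can be approximated, at the single point $n=1$, by an admissible function with finite support inside $H$.

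The plan is to multiply pointwise by a suitable finite-support positive definite ``damping'' factor. The natural candidate is (the restriction to $\ZZ$ of) the Fej\'er kernel coefficient sequence
\[
K_N(n):=\max\!\Bigl(1-\tfrac{|n|}{N},\,0\Bigr) \qquad (n\in\ZZ).
\]
Then $K_N$ is finitely supported on $(-N,N)$, $K_N(0)=1$, $K_N(1)=1-1/N\to 1$, and $K_N\gg 0$ on $\ZZ$ by Theorem \ref{p:Herglotz}, since
\[
\check{K_N}(t)=\sum_{|n|<N}\Bigl(1-\tfrac{|n|}{N}\Bigr)e^{2\pi int}=\frac{1}{N}\left|\frac{\sin(\pi Nt)}{\sin(\pi t)}\right|^2\geq 0.
\]

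Given $\f\in\FF^{\#}_\ZZ(H)$, I would set $\f_N(n):=\f(n)\,K_N(n)$ and verify that $\f_N\in\FF^c_\ZZ(H)$. Normalization $\f_N(0)=1$ is clear, and $\supp\f_N\subset\supp\f\cap(-N,N)\subset H$ is finite. Positive definiteness $\f_N\gg 0$ follows from the Schur (Hadamard) product theorem: for any $x_1,\dots,x_n\in\ZZ$ the two matrices $[\f(x_j-x_k)]$ and $[K_N(x_j-x_k)]$ are positive semidefinite, hence so is their entrywise product $[\f_N(x_j-x_k)]$. Alternatively one can observe via Herglotz that pointwise multiplication of sequences corresponds to convolution of the associated positive spectral measures on $\TT$, and the convolution of two positive measures is positive. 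With $\f_N$ thus in $\FF^c_\ZZ(H)$ and $|\f_N(1)|=(1-1/N)|\f(1)|$, letting $N\to\infty$ and then taking the supremum over $\f$ gives $\CF^c(H)\geq \CF^{\#}(H)$.

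The only non-routine ingredient is the pointwise-product (Schur) fact, which the paper has not formally recorded; if the authors wish to stay self-contained, I would supply a one-line justification by writing each matrix as a Gram matrix arising from the Herglotz integral representation and then observing that the Hadamard product of Gram matrices is the Gram matrix of the tensor-product vectors. Everything else is bookkeeping, and in particular no assumption on $H$ (finite, symmetric, etc.) is needed beyond what is built into the definitions of $\FF^{\#}_\ZZ(H)$ and $\FF^c_\ZZ(H)$.
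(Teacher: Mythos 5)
Your proof is correct and follows essentially the same route as the paper: the paper also multiplies a given $\f\in\FF^{\#}_\ZZ(H)$ by the triangle (Fej\'er-coefficient) sequence $\Delta_N(n)=(1-\tfrac{|n|}{2N+1})_{+}$ and justifies positive definiteness of the product exactly by your ``alternative'' argument, namely that via Herglotz the pointwise product corresponds to convolving the spectral measure $\nu$ with the nonnegative Fej\'er kernel $F_N$. Your Schur-product justification is an equally valid (and self-contained) substitute for that step, and the rest of your bookkeeping matches the paper's proof.
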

Therefore, taking into account also Proposition \ref{p:realpd} {(ii)} we can put $\CF(H):=\KK^c(H)=\KK^{\#}(H)=\CF^c(H)=\CF^{\#}(H)$ for any $H\subset \ZZ$.

\begin{proof}
Clearly the supremum is taken on a smaller set in $\CF^c(H)$,
hence $\CF^c(H)\leq \CF^{\#}(H)$.

Conversely, let $\f\in \FF^{\#}_\ZZ(H)$ and let us
consider the representation, given by Theorem \ref{p:Herglotz}
of Herglotz: $\f(n)=\int_\TT e^{2\pi i nt} d\nu(t)$, with $\nu$
a positive regular Borel measure on $\TT$.

Let $N\in \NN$ be arbitrary. Then we can consider
$\psi:=\psi_N:=\f \cdot \Delta_N$, where
$\Delta_N(n):=(1-\frac{|n|}{2N+1})_{+}$, and so in particular
$\Delta_N$ and $\psi_N$ have finite support.

First let us observe that $\Delta_N$ is positive definite. This
follows from Lemma \ref{l:convolutionsquare} writing
$$
\left(\chi_{[-N,N]} \star \chi_{[-N,N]}\right) (n) =\int_\ZZ
\chi_{[-N,N]}(n-j) \chi_{[-N,N]} (j) d\mu_\ZZ(j) =
\sum_{|j|, |n-j| \leq N} 1 = (2N+1)\Delta_N(n).
$$
Also, it easily follows from the fact that
$\Delta_N(n)=\check{F_N}(n)=\int_\TT e^{2\pi i nt} F_N(t) dt$,
where $F_N(t):=\frac1{2N+1} \left( \frac{\sin(\pi
(2N+1)t)}{\sin(\pi t)}\right)^2 \geq 0$ is the classical
Fej\'er kernel, providing the positive representation of
Herglotz described in Theorem \ref{p:Herglotz}. Note that this
means that $\Delta_N(n)$ is just the Fourier transform, (i.e.
the sequence of Fourier coefficients) of $F_N$.

Now the Herglotz-type positive representation for $\psi_N$
obtains from the usual rules of convolutions and the above:
$\psi_N(n)=\check{(F_N\star \nu)} (n)= \int_\TT e^{2\pi i nt}
\left(\int_\TT \frac1{2N+1} \left( \frac{\sin(\pi
(2N+1)(t-s))}{\sin(\pi (t-s))}\right)^2 d\nu(s)\right) dt$. That
is, $\psi_N:=\f \cdot \Delta_N\gg 0$, too.

Since now $\psi_N(1)=\f(1)(1-\frac{1}{2N+1})$, clearly
$\CF^c(H)\geq \sup_{N} \{|\psi_N(1)|\}=|\f(1)|$, and as this
holds for all possible $\f\in  \FF^{\#}_\ZZ(H)$, we get
$\CF^c(H)\geq \CF^{\#}(H)$ concluding the proof.
\end{proof}

Kolountzakis and R\'ev\'esz proves in \cite[\S 2, p.
404]{kolountzakis:pointwise} that in $\RR^d$ and for an
unbounded symmetric open set $\Om$ the bounded parts
$\Om_N:=\Om\cap B_N$, where $B_N=NB$ and $B\subset \RR^d$ is
the unit ball, approximate $\Om$ in such a way that
$\CF^c_{\RR^d}(\Om_N)\to \CF^c_{\RR^d}(\Om)$ as $N\to \infty$. (The
argument is essentially the same as the one above for
Proposition \ref{p:finiteinfinite}.) Analogously,
$\CF^c(\Om_N)\to \CF^c(\Om)$ also in the group $\ZZ$. These seem to
suggest that a limiting argument should give Proposition
\ref{p:limiting} even if $\Om\subset \ZZ$ is infinite. However,
this is false.
\begin{remark}
For any $\ve>0$ there exists an infinite set $H\subset \ZZ$,
sparse enough to have $\CF^c(H)\leq 1/2 + \ve$ but still
containing a copy of $\ZZ_m$ for every $m\in \NN$, and hence
having $\CF_m(H)=1$, the maximal possible value.

In fact, $H:=\{0,\pm 1,\pm N, \pm (N+1), \pm(N+2),\dots \}$
has $\CF^c(H)=1/(2\cos\frac{2\pi}{N+2})$, see \cite[Theorem 4.4
(iii)]{kolountzakis:pointwise}.
\end{remark}
This underlies the importance of carefully distinguishing
between the cases when we work in $\ZZ$ or in any $\ZZ_m$,
which explains why we formulated separately the two, otherwise
rather similar theorems in \S \ref{s:result}.

\section{Previous work on Carath\'eodory-Fej\'er type extremal problems}\label{sec:ptTuranintro}

For general domains in arbitrary dimension $d$ the problem was
formulated in \cite{kolountzakis:pointwise}. With our above
notations and general definition we can now recall it simply as
follows.
\begin{problem}[Boas-Kac - type pointwise extremal problem for
the space]\label{pr:pointwise} Find $\KK^c_{\RR^d}(\Omega,
z)$.
\end{problem}
\begin{problem} [Tur\'an - type pointwise extremal problem for the torus]\label{pr:torus}
Find $\KK^{c}_{\TT^d} (\Omega, z)$.
\end{problem}
As is easy to see, c.f. \cite[Remark
1.4]{kolountzakis:pointwise}, $\KK^c_{\TT^d} (\Omega, z)  \ge
\KK^c_{\RR^d} (\Omega, z)$, always.

The extremal value in the above Problem \ref{pr:pointwise} was
estimated together with its periodic analogue Problem
\ref{pr:torus} in the work \cite{ABB} in dimension $d=1$ for an
interval $\Omega:=(-h,h)$. Note that Boas and Kac have already
solved the interval (hence dimension $d=1$) case of Problem
\ref{pr:pointwise} in \cite{BK}, a fact which seems to have
been unnoticed in \cite{ABB}.

These problems are not only analogous, but also related to each
other, and, in fact, Problem \ref{pr:pointwise} is only a
special, limiting case of the more complex Problem
\ref{pr:torus}, see \cite[Theorem 6.6]{kolountzakis:pointwise}.
On the other hand, Boas and Kac have already observed, that
Problem \ref{pr:pointwise} (dealt with for $\RR$ in \cite{BK})
is connected to trigonometric polynomial extremal problems. In
particular, from the solution to the interval case they deduced
the value $\MM([0,n])=2\cos\frac{2\pi}{n+2}$ of the original
extremal problem due to Carath\'eodory \cite{Cara} and Fej\'er
\cite{Fej} or \cite[I, page 869]{Fgesamm}. They also
established a connection (see \cite[Theorem 6]{BK}) what
corresponds to the one-dimensional case of the first part of
\cite[Theorem 2.1]{kolountzakis:pointwise}.

Our results will extend these results together with the until
now most general results of \cite{kolountzakis:pointwise},
comprising all these and much more. So first let us record
these results here.
\begin{theorem}[Kolountzakis-R\'ev\'esz]\label{th:KolRev} In
$\RR^d$ and for any $z\in\RR^d$ and $\Om\subset \RR^d$ an open,
symmetric neighborhood of ${\bf 0}\in\RR^d$, we have with
$H(\Om,z):=\{k\in \ZZ~:~ kz\in \Om\}$ the relation
\begin{equation}\label{eq:KRRdcase}
\KK^{\#}_{\RR^d}(\Omega,z)=\KK^c_{\RR^d}(\Omega,z)=\KK(H(\Omega,z)).
\end{equation}
If $\Om\subset\TT^d$ is an open symmetric neighborhood of ${\bf
0}\in\TT^d$, and the order of $z$ is infinite (i.e. $z$ has no
torsion), then we have with $H(\Om,z):=\{k\in \ZZ~:~ kz\in
\Om\}$
\begin{equation}\label{eq:KRTTdozinfty}
\KK^{\#}_{\TT^d}(\Omega,z)=\KK^c_{\TT^d}(\Omega,z)=\KK(H(\Omega,z)).
\end{equation}
Finally, if the order of $z\in\TT^d$ is $o(z)=m$, then with
$H_m(\Om,z):=\{k\in \ZZ_m~:~ kz\in \Om\}$ we have
\begin{equation}\label{eq:KRTTdozm}
\KK^{\#}_{\TT^d}(\Omega,z)=\KK^c_{\TT^d}(\Omega,z)=\KK_m(H_m(\Omega,z)).
\end{equation}
\end{theorem}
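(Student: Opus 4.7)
My plan is to prove each of the three equalities by the two one-sided bounds
\[
\KK^{\#}_G(\Om,z)\leq \KK(H)\quad\text{and}\quad \KK^{c}_G(\Om,z)\geq \KK(H),
\]
with $H=H(\Om,z)\subset\ZZ$ in the $\RR^d$ and infinite-order $\TT^d$ cases, and with $\KK_m$ and $H=H_m(\Om,z)\subset\ZZ_m$ in the finite-order case \eqref{eq:KRTTdozm}. Combined with the trivial inclusion $\FF^{c\RR}_G(\Om)\subset\FF^{\#\RR}_G(\Om)$ (giving $\KK^c_G\le\KK^{\#}_G$), this closes the chain. Throughout I freely use $\KK(H)=\KK^c(H)=\KK^{\#}(H)$, a consequence of Propositions \ref{p:realpd}(ii) and \ref{p:finiteinfinite}.

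For the \textbf{upper bound}, I restrict to the cyclic subgroup generated by $z$. Given $f\in \FF^{\#\RR}_G(\Om)$, the sampled sequence $\psi(k):=f(kz)$ is a real positive definite function on $\ZZ$ (or on $\ZZ_m$ when $o(z)=m$), as \eqref{eq:posdefdfnd} applied only to elements $x_j=k_jz$ shows directly. Moreover $\psi(0)=1$, $\supp\psi\subset H$ (because $kz\notin\Om$ forces $f(kz)=0$), and $|\psi(1)|=|f(z)|$. Taking suprema yields the claimed bound.

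For the \textbf{lower bound}, I construct $f$ from $\psi$ by smearing. Given a near-extremal real $\psi$ supported in $H$ with $\psi(0)=1$ (finitely supported by Proposition \ref{p:finiteinfinite} in the $\ZZ$ case, automatically so on $\ZZ_m$), Lemma \ref{l:FR} writes $\psi=\theta\star\widetilde\theta$ with $\theta$ finitely supported. Pick a symmetric open $V\ni 0$ in $G$ (size chosen below) and set
\[
g(x):=\sum_k \theta(k)\chi_{V+kz}(x),\qquad F:=g\star\widetilde g.
\]
Then $F\in C(G)$ and $F\gg 0$ by Lemma \ref{l:convolutionsquare}. Using $\chi_{V+kz}=\delta_{kz}\star\chi_V$ (cf.\ \eqref{eq:deltastarchi}) together with associativity, each cross term factorizes as $\chi_{V+kz}\star\chi_{-V-\ell z}=\delta_{(k-\ell)z}\star(\chi_V\star\chi_{-V})$, so the defining double sum telescopes, after reindexing $m=k-\ell$, to
\[
F(x)=\sum_m \psi(m)\,K(x-mz),\qquad K(x):=\chi_V\star\chi_{-V}(x)=\mu_G\bigl(V\cap(x+V)\bigr)
\]
(where \eqref{eq:chistarchi} gives the last identity). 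Hence $\supp F\subset \bigcup_{m\in\supp\psi}(mz+V-V)$, which is compactly contained in $\Om$ once $V$ is small enough (finitely many open conditions, each available since $mz\in\Om$ and $\Om$ is open). If moreover $V-V$ is disjoint from the finite nonzero set $\{(k-m)z:k\in\{0,1\},\,m\in\supp\psi\}\setminus\{0\}$, then every off-diagonal $K((k-m)z)$ vanishes, and so $F(0)=\psi(0)K(0)=\mu_G(V)$ and $F(z)=\psi(1)\mu_G(V)$. Setting $f:=(\Re F)/F(0)$ yields a real, continuous, positive definite function (recall $\Re F\gg 0$ whenever $F\gg 0$, see the remarks around \eqref{eq:charactertimesf}) compactly supported in $\Om$, with $f(0)=1$ and $f(z)=\psi(1)$; this gives $\KK^c_G(\Om,z)\geq \psi(1)$, and supremizing over $\psi$ finishes.

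The \textbf{main obstacle} is the avoidance step in the infinite-order torus case: when $o(z)=\infty$ on $\TT^d$, the orbit $\{kz\}$ may be dense in a subtorus, so no metric-ball choice of $V$ works directly. The resolution is that only \emph{finitely many nonzero} points need be avoided (finiteness of $\supp\psi$, and $nz\ne 0$ for $n\ne 0$ by infinite order), and in any Hausdorff group one can choose a symmetric open $V$ with $V-V$ in the complement of any prescribed finite set of nonzero points. In $\RR^d$ the orbit $\{kz\}$ is discrete and metric separation suffices; in the finite-order case $o(z)=m$ the orbit has exactly $m$ elements, all distinct by injectivity of $\ZZ_m\hookrightarrow\TT^d$, so at most $m-1$ nonzero points are avoided.
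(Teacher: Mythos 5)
Your proposal is correct and is essentially the argument of the paper itself --- not for Theorem \ref{th:KolRev} as such, which the paper merely cites from \cite{kolountzakis:pointwise}, but for its general LCA-group Theorems \ref{th:KreneditsL} and \ref{th:KreneditsFini} (of which the stated result is the special case $G=\RR^d$, $\TT^d$): there too the upper bound comes from restricting $f$ to $\langle z\rangle$, and the lower bound from $f=(\chi_W\star\sigma)\star\widetilde{(\chi_W\star\sigma)}$ with $\sigma=\sum_k\theta(k)\delta_{kz}$, $\theta$ a Fej\'er--Riesz root from Lemma \ref{l:FR}, and $W$ chosen small enough both to push the support inside $\Om$ and to avoid the finitely many nonzero orbit points. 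Your construction, choice of $V$, and use of Propositions \ref{p:realpd}(ii) and \ref{p:finiteinfinite} match that proof step for step, so nothing essential is different or missing.
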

Actually, the above can be collected from \cite[Theorem
2.1]{kolountzakis:pointwise} and \cite[Theorem
2.4]{kolountzakis:pointwise}. The most important aspect of it
is perhaps the understanding that the above point-value
extremal problems depend only on the set $H(\Om,z)$ and the
order of $z$ itself, and are in fact equivalent to the
trigonometric polynomial extremal problems given in
\eqref{eq:CForigdef} and \eqref{CFfinitemdef}. In other words,
the result is transferring information to the given more general problem from the corresponding equivalent other problem in $\ZZ$ or in $\ZZ_m$ in all cases.
Until that work the equivalence remained unclear in spite of
the fact that, e.g., Boas and Kac found ways to deduce the
solution of the trigonometric extremal problem
\eqref{eq:CForigdef} from their results on Problem
\ref{pr:pointwise}. Kolountzakis and R\'ev\'esz also obtained a
clear picture of the limiting relation between torus problems
and space problems, formulated above as Problem \ref{pr:pointwise} and
Problem \ref{pr:torus}, and parallel to this, between the
finitely conditioned trigonometric polynomial extremal problem
\eqref{CFfinitemdef} and the positive definite trigonometric
polynomial extremal problem \eqref{eq:CForigdef}. Furthermore,
the investigation was extended to arbitrary (symmetric open)
sets $\Omega \subset \RR^d$ or $\TT^d$, dropping the condition
of convexity of $\Omega$.

Let us remark, however, that even with the above equivalence result, the actual calculation of the extremal values may still take considerable work and innovation, see e.g. \cite{Ivanov}. For the numerous applications see the original paper \cite{kolountzakis:pointwise} and the references \cite{ABB, BK, R2, Mini, R3}.

Ending this section, let us recall that investigation of
so-called Tur\'an-type problems started with keeping an eye on
number theoretic applications and connected problems. The
interesting papers of Gorbachev and Manoshina \cite{GorbMan01,
GorbMan04} mention \cite{KS} and character sums; applications
to van der Corput sets were mentioned by several authors and in
particular by Ruzsa \cite{ruzsa:uniform}. Here we recall
another question of a number theoretic relevance, open for at
least two decades by now, and also mentioned in
\cite{kolountzakis:pointwise}.
\begin{problem}\label{Deltaproblem} Determine $ \Lambda (n) :=
\sup \{ \MM(H)/2 \, : \,\, 1\in H\subseteq \NN, |H|=n \}$.
\end{problem}
We only know (cf \cite{R1}) $ 1-{5 \over (n+1)^2} \le \Lambda
(n) \le 1 - {0.5 \over (n+1)^2}$. The question is relevant to
the Beurling theory of generalized primes, see \cite{R4}.

\section{Formulation of the main results}\label{s:result}

For points $z\in G$ with infinite order the problem becomes
equivalent to the trigonometric polynomial extremal problem of
the sort \eqref{eq:CForigdef}.

\begin{theorem}\label{th:KreneditsL}
Let $G$ be any locally compact Abelian group and let
$\Om\subset G$ be an open (symmetric) neighborhood of $0$. Let
also $z\in \Om$ be any fixed point with $o(z)=\infty$, and
denote $H(\Om,z):=\{k\in \ZZ~:~ kz\in \Om\}$. Then we have
\begin{align}\label{eq:mainresult}
\CF^c_G(\Om,z)=\KK_G^c(\Om,z)=\CF^\#_G(\Om,z)=\KK_G^\#(\Om,z)=\CF(H(\Om,z)).
\end{align}
\end{theorem}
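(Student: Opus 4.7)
The plan is to close a chain of five inequalities between the quantities in \eqref{eq:mainresult}. Elementary inclusions of the underlying function classes give $\KK^c_G(\Om,z)\le \CF^c_G(\Om,z)\le \CF^\#_G(\Om,z)$ and $\KK^c_G(\Om,z)\le \KK^\#_G(\Om,z)\le \CF^\#_G(\Om,z)$, so it suffices to establish the two bracketing inequalities $\CF^\#_G(\Om,z)\le \CF(H(\Om,z))$ and $\CF(H(\Om,z))\le \KK^c_G(\Om,z)$.

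For the upper bound, given any $f\in \FF^\#_G(\Om)$ I would pull $f$ back along the map $n\mapsto nz$, setting $\f(n):=f(nz)$; since $o(z)=\infty$ this map is injective. Substituting $x_j:=k_j z$ in the defining inequality \eqref{eq:posdefdfnd} shows $\f\gg 0$ on $\ZZ$, while $\f(n)=0$ whenever $nz\notin\Om$, i.e.\ outside $H(\Om,z)$. Thus $\f\in\FF^\#_\ZZ(H(\Om,z))$ with $\f(0)=1$, and $|f(z)|=|\f(1)|\le \CF^\#(H(\Om,z))=\CF(H(\Om,z))$ by Propositions \ref{p:realpd} and \ref{p:finiteinfinite}.

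For the lower bound, which is the heart of the argument, the idea is to take an almost-extremal \emph{real-valued} sequence $\f\in\FF^{c\RR}_\ZZ(H(\Om,z))$ (legitimate by Proposition \ref{p:realpd}(ii)), factor it via the Fej\'er--Riesz Lemma \ref{l:FR}(i) as $\f=\theta\star\widetilde\theta$ with $\theta$ finitely supported, and then ``thicken'' $\theta$ on $G$ using a small symmetric open neighborhood $V$ of $0$:
$$
\Phi:=\sum_{k\in\supp\theta}\theta(k)\chi_{V+kz}=\Bigl(\sum_k\theta(k)\de_{kz}\Bigr)\star\chi_V,\qquad F:=\Phi\star\widetilde\Phi.
$$
By Lemma \ref{l:convolutionsquare}, $F\gg 0$ and $F\in C(G)$; using commutativity and associativity of convolution together with \eqref{eq:deltastarchi}--\eqref{eq:chistarchi}, one computes
$$
F(x)=\sum_{n\in\ZZ}\f(n)\,K(x-nz),\qquad K(y):=\mu_G\bigl(V\cap(V+y)\bigr),
$$
which is real-valued since both $\f$ and the kernel $K$ are real, and which vanishes for $y\notin V-V$.

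The crucial step is to choose $V$ small enough that two things happen simultaneously. First, since $\supp\f$ is finite and $\Om$ is open, continuity of addition yields a symmetric open $V$ with $(V-V)+\ell z\subset\Om$ for every $\ell\in\supp\f$, which forces $\supp F\Subset\Om$. Second, I would shrink $V$ further so that $V-V$ avoids the finite set $\{mz:m\in(\supp\f)\cup(1-\supp\f),\ m\neq 0\}$ of nonzero points of $G$ — possible because $G$ is Hausdorff and $o(z)=\infty$ makes all these points distinct from $0$. With such a $V$, the sum defining $F(0)$ collapses to its $n=0$ term, giving $F(0)=\mu_G(V)$, and the sum for $F(z)$ collapses to its $n=1$ term, giving $F(z)=\f(1)\mu_G(V)$. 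Hence $F/F(0)\in\FF^{c\RR}_G(\Om)$ realizes the value $\f(1)$ at $z$, and taking supremum over $\f$ yields $\KK^c_G(\Om,z)\ge\KK^c(H(\Om,z))=\CF(H(\Om,z))$, closing the chain.

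The main obstacle I anticipate is precisely this topological calibration of $V$ — simultaneously ensuring the containment $(V-V)+\ell z\subset\Om$ and the separation $V-V\not\ni mz$ for $m\neq 0$, and then verifying rigorously that the two finite-term collapses go through. This is exactly where the hypothesis $o(z)=\infty$ is indispensable: it guarantees that $\{nz:n\in\ZZ\setminus\{0\}\}$ consists of points of $G\setminus\{0\}$ which a sufficiently small $V-V$ can avoid. For the torsion case $o(z)=m<\infty$ this collapse would fail and one would have to work over $\ZZ_m$ using Lemma \ref{l:FR}(ii) instead, which is presumably the subject of a separate theorem.
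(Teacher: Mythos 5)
Your proposal is correct and follows essentially the same route as the paper: the upper bound by restricting $f$ to $\langle z\rangle$ and pulling back along $n\mapsto nz$, and the lower bound via the Fej\'er--Riesz factorization of Lemma \ref{l:FR}(i) followed by the convolution square of $\sigma\star\chi_V$ with the neighborhood calibrated exactly as you describe --- the paper takes a \emph{compact} neighborhood $W$ (intersected over finitely many conditions), which also settles the routine points you flagged, namely $\chi_W\star\sigma\in L^2(\mu_G)$ and $\supp F\Subset\Om$. Your single sandwich $\KK^c_G\le\CF^c_G\le\CF^\#_G\le\CF(H)\le\KK^c_G$, exploiting that the constructed $F=\bigl(\chi_V\star\chi_{-V}\bigr)\star\nu$ is automatically real-valued when $\f$ is real, is a mild streamlining of the paper's separate treatment of the complex and real variants, but not a genuinely different argument.
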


\begin{corollary}\label{c:zinftyalleq} For $G$ any locally compact Abelian group,
$\Om\subset G$ any open (symmetric) neighborhood of $0$, and
$z\in \Om$ any fixed point with $o(z)=\infty$, we have
$\CF^c_G(\Om,z)=\KK_G^c(\Om,z)=\CF^\#_G(\Om,z)=\KK_G^\#(\Om,z)$,
the common value of which can thus be denoted simply by
$\KK_G(\Om,z)$ or $\CF_G(\Om,z)$.
\end{corollary}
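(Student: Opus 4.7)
The plan is to prove two matching bounds, one by restriction to the cyclic subgroup $\langle z\rangle$ and one by an explicit convolution-square construction on $G$, and then to collapse all five quantities in \eqref{eq:mainresult} onto the common value $\CF(H(\Om,z))$ by invoking Propositions~\ref{p:realpd}\,\textit{(ii)} and \ref{p:finiteinfinite}, which already identify $\CF^c(H)=\CF^\#(H)=\KK^c(H)=\KK^\#(H)$ on $\ZZ$.

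For the upper bound I would take any $f\in\FF_G^\#(\Om)$ and simply restrict to $\langle z\rangle$: since $o(z)=\infty$, this subgroup is an algebraic copy of $\ZZ$, and $\varphi(k):=f(kz)$ is positive definite on $\ZZ$ (use $x_j:=k_jz$ in \eqref{eq:posdefdfnd}), normalized by $\varphi(0)=1$, and vanishes outside $H(\Om,z)$ since $k\notin H(\Om,z)$ forces $kz\notin\Om$ and hence $f(kz)=0$. Thus $\varphi\in\FF_\ZZ^\#(H(\Om,z))$, yielding $|f(z)|=|\varphi(1)|\leq \CF^\#(H(\Om,z))=\CF(H(\Om,z))$.

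For the lower bound I would fix an (almost-)extremal $\varphi\in\FF_\ZZ^c(H(\Om,z))$ (finitely supported by definition), factor $\varphi=\theta\star\widetilde{\theta}$ by Lemma~\ref{l:FR}\,\textit{(i)}, and transport the construction to $G$ via the finitely supported measure $\sigma_\theta:=\sum_k \theta(k)\delta_{kz}\in M(G)$. For a symmetric open relatively compact neighborhood $V\ni 0$ to be chosen below, set $\Phi:=\sigma_\theta\star\chi_V\in L^2(\mu_G)$ and
\[
F:=\Phi\star\widetilde{\Phi}=(\sigma_\theta\star\widetilde{\sigma_\theta})\star(\chi_V\star\chi_{-V})=\sigma_\varphi\star h,
\]
where $\sigma_\varphi:=\sum_n \varphi(n)\delta_{nz}$ and $h:=\chi_V\star\chi_{-V}$. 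Lemma~\ref{l:convolutionsquare} delivers $F\gg 0$ and $F\in C(G)$, while the formula $F(x)=\sum_{n\in\supp\varphi}\varphi(n)\,h(x-nz)$ makes the support analysis transparent: $\supp F\subset\bigcup_{n\in\supp\varphi}(\overline{V-V}+nz)$, a finite union of compact translates that will sit compactly inside $\Om$ provided $V$ is small enough.

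The decisive step, and the main obstacle, is the selection of $V$. I would impose two simultaneous requirements, (i) $(\overline{V-V})+nz\Subset\Om$ for every $n\in\supp\varphi$, and (ii) $(V-V)\cap S=\emptyset$ for the finite set
\[
S:=\{kz:\;k\in(\supp\varphi\cup(\supp\varphi-1))\setminus\{0\}\}\subset G\setminus\{0\}.
\]
Here the hypothesis $o(z)=\infty$ enters decisively: it guarantees $kz\neq 0$ for every $0\neq k\in\ZZ$, so $S$ really avoids the origin, and the Hausdorff property built into local compactness of $G$ supplies a symmetric open $V$ with $V-V$ avoiding the finite set $S$; meanwhile openness of $\Om$ and finiteness of $\supp\varphi$ take care of (i). Under (ii) the sum evaluating $F$ collapses to $F(0)=\varphi(0)h(0)=\mu_G(V)$ and $F(z)=\varphi(1)h(0)$, so $F/F(0)\in\FF_G^c(\Om)$ attains $|F(z)|/F(0)=|\varphi(1)|$ at $z$. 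Supremizing over $\varphi$ yields $\CF_G^c(\Om,z)\geq \CF^c(H(\Om,z))$; when $\varphi$ is real-valued $F$ also stays real, which upgrades this to $\KK_G^c(\Om,z)\geq \KK^c(H(\Om,z))$. Combining with the trivial inequalities $\KK^\bullet_G\leq\CF^\bullet_G$ and $\CF_G^c\leq\CF_G^\#$ closes the chain around $\CF(H(\Om,z))$ and establishes \eqref{eq:mainresult}.
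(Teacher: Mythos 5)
Your argument is correct and is essentially the paper's own route: the corollary is read off from Theorem \ref{th:KreneditsL}, whose proof likewise obtains the upper bound by restricting $f$ to $\langle z\rangle\cong\ZZ$ and the lower bound by the Fej\'er--Riesz factorization of Lemma \ref{l:FR}(i) followed by the convolution square of $\chi_W\star\sigma$ with a small neighborhood chosen so that $W-W$ avoids the relevant nonzero multiples of $z$, then collapses the $\ZZ$-side quantities via Propositions \ref{p:realpd}(ii) and \ref{p:finiteinfinite}. Your only deviation is cosmetic (and in fact slightly more careful): you impose the separation condition on the set indexed by $\supp\varphi\cup(\supp\varphi-1)$, which is exactly what the evaluation at $z$ requires.
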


If $z\in G$ is cyclic (has torsion), the situation is
analogous: then Problem \ref{pr:ptwgeneral} reduces to a
well-defined discrete problem of the sort \eqref{CFfinitemdef}.

\begin{theorem}\label{th:KreneditsFini}
Let $G$ be any locally compact Abelian group and let
$\Om\subset G$ be an open (symmetric) neighborhood of $0$. Let
also $z\in \Om$ be any fixed point with $o(z)=m<\infty$, and
denote $H_m(\Om,z):=\{k\in \ZZ_m~:~ kz \in \Om\}$. Then we have
\begin{equation}\label{eq:mainresultFini}
\CF_G^\#(\Om,z)=\CF_G^c(\Om,z)=\CF_m(H_m(\Om,z)) \quad \text{\rm and}\quad
\KK_G^{\#}(\Om,z)=\KK_G^{c}(\Om,z)=\KK_m(H_m(\Om,z)).
\end{equation}
\end{theorem}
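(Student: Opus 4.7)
The plan is to prove the double sandwich $\CF_m(H_m(\Om,z))\le \CF^c_G(\Om,z)\le \CF^{\#}_G(\Om,z)\le \CF_m(H_m(\Om,z))$ together with its $\KK$-analogue; since $\FF^c_G(\Om)\subset \FF^{\#}_G(\Om)$, the middle inequalities are automatic. The whole scheme parallels the infinite-order case covered by Theorem \ref{th:KreneditsL}, the only change being that the cyclic subgroup $\langle z\rangle$ of $G$ is now a copy of $\ZZ_m$ rather than of $\ZZ$, and the classical Fej\'er--Riesz factorization on $\ZZ$ is replaced by its cyclic analogue in Lemma \ref{l:FR}(ii).

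For the upper bound I simply \emph{restrict}: given $f\in \FF^\#_G(\Om)$, define $\f:\ZZ_m\to\CC$ by $\f(k):=f(kz)$. Plugging test points $x_j:=k_jz\in \langle z\rangle$ into \eqref{eq:posdefdfnd} shows that $\f\gg 0$ on $\ZZ_m$; the normalization $\f(0)=1$ is immediate, and $k\notin H_m$ forces $kz\notin \Om$ and hence $\f(k)=0$. Thus $\f\in \FF_{\ZZ_m}(H_m(\Om,z))$ with $|\f(1)|=|f(z)|$, yielding $\CF^\#_G(\Om,z)\le \CF_m(H_m(\Om,z))$; the restriction of a real-valued $f$ is real, giving the corresponding bound for $\KK^\#_G$.

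The lower bound is the substantive step. Given $\f\in \FF_{\ZZ_m}(H_m)$, Lemma \ref{l:FR}(ii) supplies $\theta:\ZZ_m\to\CC$ with $\f=\theta\star\widetilde{\theta}$, and I transport $\theta$ to $G$ by the finite discrete measure $\s:=\sum_{k\in\ZZ_m}\theta(k)\de_{kz}\in M(G)$. A direct computation via \eqref{eq:measureconv} gives $\s\star\widetilde{\s}=\sum_{\ell\in\ZZ_m}\f(\ell)\de_{\ell z}=\sum_{\ell\in H_m}\f(\ell)\de_{\ell z}$. Since $G$ is locally compact Hausdorff, the finite set $\langle z\rangle\setminus\{0\}$ is closed and disjoint from $0$, and for each $\ell\in H_m$ one has $\ell z\in \Om$; hence an open neighborhood $V$ of $0$ with compact closure can be chosen so that $\overline{V-V}\cap\langle z\rangle=\{0\}$ and $\overline{V-V}+\ell z\subset \Om$ for every $\ell\in H_m$. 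Setting $\Phi:=\chi_V\star\s$, one has $\Phi\in L^2(\mu_G)$, so $F:=\Phi\star\widetilde{\Phi}$ is continuous and positive definite by Lemma \ref{l:convolutionsquare}. Associativity and commutativity of convolution, together with \eqref{eq:chistarchi} and the auxiliary function $g(x):=\chi_V\star\chi_{-V}(x)=\mu_G(V\cap(x+V))$, yield
\begin{equation*}
F(x)=g\star(\s\star\widetilde{\s})(x)=\sum_{\ell\in H_m}\f(\ell)\,g(x-\ell z),
\end{equation*}
whose support lies in $\bigcup_{\ell\in H_m}(\ell z+\overline{V-V})\Subset \Om$. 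The separation condition on $V$, combined with the symmetry $V-V=-(V-V)$, forces $g(\pm\ell z)=0$ for all $\ell\not\equiv 0\pmod m$, so only the diagonal terms survive: $F(0)=\f(0)\mu_G(V)=\mu_G(V)$ and $F(z)=\f(1)\mu_G(V)$. Hence $F/F(0)\in \FF^c_G(\Om)$ attains the value $\f(1)$ at $z$, giving $\CF^c_G(\Om,z)\ge \CF_m(H_m)$ and closing the $\CF$ chain.

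For the $\KK$ chain the same $F$ works: since $F\gg 0$ implies $F=\widetilde{F}$ by \eqref{eq:fequalsftilde}, the real part $\Re F$ is again positive definite, continuous, still supported in $\Om$, with $\Re F(0)=\mu_G(V)$ and, whenever $\f$ is real, $\Re F(z)=\f(1)\mu_G(V)$; thus $\Re F/F(0)\in \FF^{c\RR}_G(\Om)$, establishing $\KK^c_G(\Om,z)\ge \KK_m(H_m)$ and closing the real-valued chain by the same sandwich. The main obstacle will be the neighborhood construction, where both separation from $\langle z\rangle\setminus\{0\}$ and inclusion of $\ell z+\overline{V-V}$ inside $\Om$ for every $\ell\in H_m$ must be secured simultaneously using only local compactness, the Hausdorff property, and finiteness of $H_m$; a secondary subtlety is that Lemma \ref{l:FR}(ii) offers no control on $\supp\theta$, so the compact support of $F$ within $\Om$ must be read off from the \emph{known} support of $\f=\theta\star\widetilde\theta$, namely $H_m$, rather than from that of $\theta$ itself.
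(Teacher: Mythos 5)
Your proposal is correct and follows essentially the same route as the paper: the paper proves Theorem \ref{th:KreneditsFini} by adapting the proof of Theorem \ref{th:KreneditsL} verbatim — upper bound by restricting $f$ to the finite cyclic subgroup $\langle z\rangle\cong\ZZ_m$, lower bound by taking the $\ZZ_m$ Fej\'er--Riesz root from Lemma \ref{l:FR}(ii), transporting it to the measure $\sum_k\theta(k)\de_{kz}$, and forming the convolution square smoothed by $\chi_W\star\chi_{-W}$ with a suitably small compact neighborhood $W$ of $0$, exactly your sandwich argument. Your handling of the two subtleties you flag (reading the support off $\f=\theta\star\widetilde\theta$ rather than off $\theta$, and the real case via $\Re F$, which is in fact already real since $g$ is real-valued) matches the paper's treatment.
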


\begin{corollary}\label{c:zfiniteeq} For $G$ any locally compact Abelian group,
$\Om\subset G$ any open (symmetric) neighborhood of $0$, and
$z\in \Om$ any fixed point with $o(z)<\infty$, we still have
$\CF^c_G(\Om,z)=\CF^\#_G(\Om,z)$ and
$\KK_G^c(\Om,z)=\KK_G^\#(\Om,z)$, the common value of which can
thus be denoted by $\CF_G(\Om,z)$ and $\KK_G(\Om,z)$,
respectively.
\end{corollary}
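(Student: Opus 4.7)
My plan is short because the corollary is essentially a reading of Theorem \ref{th:KreneditsFini}. That theorem already supplies the chains
\[
\CF_G^{\#}(\Om,z)=\CF_G^c(\Om,z)=\CF_m(H_m(\Om,z)), \qquad \KK_G^{\#}(\Om,z)=\KK_G^{c}(\Om,z)=\KK_m(H_m(\Om,z)),
\]
so I would simply quote the first equality in each chain to obtain the two claimed identities $\CF^c_G(\Om,z)=\CF^\#_G(\Om,z)$ and $\KK^c_G(\Om,z)=\KK^\#_G(\Om,z)$. With these equalities in hand, introducing the abbreviations $\CF_G(\Om,z)$ and $\KK_G(\Om,z)$ for the common values is purely notational, exactly as in Corollary \ref{c:zinftyalleq}.

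Conceptually, this corollary records the striking fact that in the torsion case the a priori much more restrictive extremal problem over $\FF_G^c(\Om)$ -- requiring continuity on all of $G$ together with compact containment of the support inside $\Om$ -- has the same supremum as the purely pointwise problem over $\FF_G^{\#}(\Om)$. That the two match is far from obvious by direct comparison of function classes; the point of Theorem \ref{th:KreneditsFini} is to compute both sides as the same discrete quantity on $\ZZ_m$, at which point the match is automatic.

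The main obstacle, were one to attempt to prove the corollary directly without invoking the full strength of Theorem \ref{th:KreneditsFini}, would be the upper-bound direction: given $f\in \FF_G^{\#}(\Om)$ with $|f(z)|$ close to $\CF_G^{\#}(\Om,z)$, one would need to exhibit a continuous compactly supported $F\in \FF_G^c(\Om)$ with $|F(z)|$ no smaller. The natural route is to restrict $f$ to the finite cyclic subgroup $\langle z\rangle\cong \ZZ_m$, apply the Fej\'er-Riesz-type factorization $\f=\theta\star\widetilde\theta$ of Lemma \ref{l:FR}(ii), and then inflate the discrete solution to $G$ by forming $\Phi:=\sum_{j\in\ZZ_m}\theta(j)\,\chi_{V+jz}$ for a sufficiently small symmetric neighborhood $V$ of $0$ in $G$ and setting $F:=\Phi\star\widetilde\Phi$. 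Lemma \ref{l:convolutionsquare} then guarantees $F\gg 0$ and continuity, while the finiteness $o(z)=m<\infty$ makes it possible to choose a single $V$ so that every $V+jz$ ($j\in\ZZ_m$) sits inside a prescribed compact subset of $\Om$, in close analogy with Proposition \ref{p:lowerhalf}. Once that lifting is in place, the notational consolidation claimed by the corollary is immediate.
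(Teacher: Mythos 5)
Your proposal is correct and matches the paper: Corollary \ref{c:zfiniteeq} is stated there as an immediate consequence of the equalities in \eqref{eq:mainresultFini} of Theorem \ref{th:KreneditsFini}, with the names $\CF_G(\Om,z)$ and $\KK_G(\Om,z)$ introduced purely as notation for the common values. Your additional sketch of a direct lifting argument is in fact just the paper's proof of the theorem itself, so nothing further is needed.
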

Note that this also holds true if $o(z)=\infty$, furthermore, than $\CF_G(\Om,z)=\KK_G(\Om,z)$ according to Corollary \ref{c:zinftyalleq}. We will use these notations in the last section.

\section{Proofs of the Main Results}\label{s:proof}

\begin{proof}[Proof of Theorem \ref{th:KreneditsL}]
We present the argument only for
$\CF_G^\#(\Om,z)=\CF_G^c(\Om,z)=\CF^c(H(\Om,z))$ (i.e. the
complex case), for the real variant
$\KK_G^{\#}(\Om,z)=\KK_G^{c}(\Om,z)=\KK^{c}(H(\Om,z))$ is
completely similar. Once these real and complex variants are
proved, a combination of Proposition \ref{p:realpd} (ii) and
Proposition \ref{p:finiteinfinite} gives that the right hand
sides of these equalities are all equal.

\smallskip

To simplify the notation somewhat, we will write throughout
this proof $H:=H(\Om,z)$.

As we trivially have $\CF_G^\#(\Om,z)\geq\CF_G^c(\Om,z)$, to
derive the equality of these quantities of the complex setup
and $\CF(H)$--which is the common value of
$\CF^\#(H)=\CF^c(H)$ in view of Proposition
\ref{p:finiteinfinite}--it suffices to prove $\CF_G^\#(\Om,z)
\leq \CF(H)(= \CF^\#(H)=\CF^c(H))\leq \CF_G^c(\Om,z)$ only.

\smallskip

So we are to prove only two inequalities, the first being that
$\CF_G^\#(\Om,z) \leq \CF^\#(H)$. Let us take any $f\in
\FF^\#_G(\Om)$, and consider the subgroup $Z:=\langle z \rangle
\leq G$.

Observe that $g:=f|_Z \gg 0$ on $Z$, for if the defining
requirements \eqref{eq:posdefdfnd} hold for all selections of
the $x_j\in G$, then obviously they must also hold for all
values chosen from $Z$. So this way we have defined a function
$g \in \FF^\#_Z((\Om\cap Z))$. Finally, let us remark that the
natural isomorphism $\eta:\ZZ\to Z$, which maps according to
$\eta(k):=kz$, carries over $g$, defined on $Z\le G$, to a
function $\psi:=g\circ \eta$, which is therefore positive
definite on $\ZZ$, has normalized value $\psi(0)=g(0)=f(0)=1$,
and $\supp \psi \subset H$ for $\supp g \subset \supp f \Subset
\Om$.

From here we read that $|f(z)| \leq \sup \left\{ |\psi(1)| ~:~
\psi \in \FF^\#_\ZZ(H) \right\} =\CF^\#(H)$. Taking $\sup_{f\in
\FF^\#_G(\Om)}$ on the left hand side concludes the proof of
the first part.

\smallskip

It remains to show the inequality $\CF^c(H)\leq
\CF^c_G(\Om,z)$.

Let $\psi\in \FF^c_\ZZ(H)$, so also positive definite and of
finite support $S\subset H$, say. We also define the respective
measure $\nu:=\sum_{k\in S} \psi(k) \delta_{kz}$, where $\de_s$
is the Dirac measure, concentrated at $s\in G$. By definition
of $H$, $S=\supp \nu$ is a finite subset of $H:=H(\Om,z):=Z\cap
\Om$.

In view of Lemma \ref{l:FR} (i), to $\psi$ there exists another
sequence $\theta:\ZZ\to \CC$ of finite support $Q:=\supp
\theta$ such that $\psi=\theta\star\widetilde{\theta}$. Let us
define the measure $\sigma:=\sum_{k\in Q} \theta(k)
\delta_{kz}$. Note that $Q$, though, is not necessarily
included in $H$, therefore, the finite subset $\supp \s \subset
Z$ is not necessarily a subset of $\Om$. Nevertheless,
$\psi=\theta\star\widetilde{\theta}$ means that for each $k\in
\ZZ$ we have $\psi(k)=\sum_{m\in Q} \theta(m)
\widetilde{\theta}(k-m) = \sum_{m\in Q} \theta(m)
\overline{\theta(m-k)}$ and so this vanishes outside $S\subset
H$, whence
\begin{equation}\label{sigmastarsigmatilde}
\sigma\star\widetilde{\sigma} =\sum_{m\in Q} \sum_{j\in Q}
\theta(m) \overline{\theta(j)} \de_{mz}\de_{-jz} =\sum_{k\in
\ZZ} \left( \sum_{m\in Q} \theta(m)\overline{\theta(m-k)}
\right) \de_{kz} = \sum_{k\in S} \psi(k) \de_{kz}=\nu,
\end{equation}
which is supported in $S\subset Z\cap \Om$.

Now our construction is the following. For a compact
neighborhood $W$ of $0$ (to be chosen suitably later), the
function $g:=\chi_W\star\sigma$ is a compactly supported step
function, hence is in $L^2(\mu_G)$, moreover, it has converse
$\widetilde{g}=\widetilde{\chi_W\star\sigma}=\widetilde{\sigma}*\chi_{-W}$
and thus the "convolution square" $f:=g\star\widetilde{g}$,
positive definite and continuous according to Lemma
\ref{l:convolutionsquare}, will be just
\begin{align}\label{eq:THEfunction}
f:=g\star\widetilde{g}&=\chi_W\star\sigma\star\widetilde{\sigma}\star\chi_{-W}
=\chi_W\star\chi_{-W}\star\sigma\star\widetilde{\sigma}
=\chi_W\star\chi_{-W}\star\nu = \sum_{k\in S} f_k, \notag
\\ & \quad \textrm{with} \quad f_k(x):=\psi(k)~
\left(h\star\de_{kz}\right)(x) =\psi(k) ~h(x-kz) ,\quad
h:=\chi_W\star\chi_{-W},
\end{align}
using also \eqref{eq:deltastarphi}. Clearly $\supp f_k= \supp h
+ kz \subset W-W+kz$, which is a compact set itself in view of
compactness of $W$. So because of finiteness of $S$ we also
find that $\cup_{k\in S} (W-W+kz)$ is compact, whence $\supp f
\subset \cup_{k\in S} \supp f_k \subset \cup_{k\in S} (W-W+kz)$
shows that also $\supp f$ is compact.

If we choose now disjoint open neighborhoods $U_k\subset \Om$
of $kz\in \Om$ for each $k\in S$, by continuity of $(x,y)\to
x-y+kz$ from $G\times G \to G$ we can take a compact
neighborhood $W_k$ of $0$ with $W_k-W_k+kz \Subset U_k$, so
intersecting the finitely many $W_k$ for all $k\in S$ we arrive
at a $W^*:=\cap_{k\in S} W_k$, compact neighborhood of $0$,
such that $W^*-{W^{*}}+kz\Subset U_k\subset \Om$ ($\forall k\in
S$). Therefore if we chose some appropriate $W\Subset W^*$,
then also $\supp f \Subset \cup_{k\in S} U_k \subset \Om$. In
all, for any such choice of $W$ we arrive at $\supp f \Subset
\Om$, as needed. Our last condition on $W$ will be that we want
$kz \in W-W$ for a $k\in S$ only if $k=0$, i.e. we require
$W-W\cap \{kz~:~k\in S\} = \{0\}$. So it suffices to fix some
open neighborhood $V\subset G$ of $0$ such that $V\subset
G\setminus \{kz~:~k\in S, k\ne 0\}$, then choose a compact
neighborhood $W'\subset G$ of $0$ satisfying $W'-W' \subset V$
(which can again be done according to the continuity of
$(x,y)\to x-y$), and then take $W:=W'\cap W^*$.

So we arrive at $f\gg 0$, $f\in C_0(G)$, $\supp f \Subset \Om$,
with $\supp f \subset \cup_{k\in S} \supp f_k$ and $\supp
f_k\Subset (W-W+kz)$. It remains to compute the function values
of $f$ at $0$ and at $z$. First, as $\supp h \subset W-W \subset V$, $h$ vanishes on all $kz$ with $k\in S\setminus \{0\}$ by construction, so from $f(0)=\sum_{k\in S} \psi(k) h(-kz)$ we get $f(0)=\psi(0) h(0)= 1 \cdot \chi_W\star\chi_{-W}(0)= \mu_G(W)(>0)$, according to \eqref{eq:chistarchi}. Second, completely similarly we have $f(z)= \sum_{k\in S} \psi(k) h(z-kz)=\psi(1) \mu_G(W)$.

In all, we can take $F:=\frac{1}{\mu_G(W)} f$, which then has
$F(0)=1$, too, and hence $F\in \FF^c_G(\Om)$, moreover, $F(z) =
\psi(1)$, hence $|\psi(1)| \leq \CF^c_G(\Om,z)$. Having this
for all $\psi \in \FF^c_\ZZ(H)$, taking supremum yields $\CF^c
(H) \leq \CF^c_G(\Om,z)$, whence the theorem.
\end{proof}

\begin{proof}[Proof of Theorem \ref{th:KreneditsFini}]
The proof of the complex and real variants are almost identical
to the preceding one, once we carefully change all references
from $\ZZ$ to $\ZZ_m$, $\CF^c(H)$ to $\CF_m(H)$ and
$\FF^c_{\ZZ}(\Om,z)$ to $\FF_{\ZZ_m}(\Om,z)$, and using Lemma \ref{l:FR} (ii) instead of (i); and similarly in the real case, while noting that $Z:=\langle z \rangle$ is only
a finite subgroup with $Z\cong \ZZ_m$, so the natural
isomorphism $\eta(k):=kz$ acts between $\ZZ_m$ and $Z$ now.
However, here we don't have the equality of the extremal
quantities $\CF_m(H)$ and $\KK_m(H)$, as in this case only the
estimates of Proposition \ref{p:realpd} (iii) hold. Therefore,
the real and complex cases here split as formulated separately
in \eqref{eq:mainresultFini}.
We spare the reader from further details of the proof.
\end{proof}

\section{Final remarks}\label{sec:final}

In view of Theorems \ref{th:KreneditsL} and
\ref{th:KreneditsFini}, the connection between the real and
complex cases in $\ZZ$, found in Proposition \ref{p:realpd}
(ii) and (iii), extends to all LCA groups. That is, we obtain
\begin{corollary}\label{complexreal} Let $G$ be any locally
compact Abelian group and let $\Om\subset G$ be an open
(symmetric) neighborhood of $0$. Let also $z\in \Om$ be any
fixed point with $o(z)=m$, and denote $H_m(\Om,z):=\{k\in
\ZZ_m~:~ kz \in \Om\}$. Then we have
\begin{equation}\label{eq:realcomplexLCAm}
\cos(\pi/m) \CF_G(\Om,z)\leq \KK_G(\Om,z)\leq \CF_G(\Om,z).
\end{equation}
Also the "$m=\infty$" case holds true giving for torsion-free
elements $z\in\Om$ the equality
\begin{equation}\label{eq:realcomplexLCA}
\KK_G(\Om,z)=\CF_G(\Om,z).
\end{equation}
\end{corollary}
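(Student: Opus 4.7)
The plan is to obtain this corollary as an essentially bookkeeping consequence of the two main theorems of Section \ref{s:result} combined with the already established real-vs-complex comparisons on $\ZZ$ and $\ZZ_m$ recorded in Proposition \ref{p:realpd}. Since Theorems \ref{th:KreneditsL} and \ref{th:KreneditsFini} identify the LCA extremal quantities $\CF_G(\Om,z)$ and $\KK_G(\Om,z)$ with the corresponding quantities on $\ZZ$ (when $o(z)=\infty$) or on $\ZZ_m$ (when $o(z)=m<\infty$), the desired inequalities transfer from the discrete side directly to the LCA side.

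First I would treat the torsion case $o(z)=m<\infty$. Setting $H:=H_m(\Om,z)$, Theorem \ref{th:KreneditsFini} (together with Corollary \ref{c:zfiniteeq}, which legitimizes the unadorned notation) gives $\CF_G(\Om,z)=\CF_m(H)$ and $\KK_G(\Om,z)=\KK_m(H)$. Plugging these identifications into Proposition \ref{p:realpd}(iii), which states $\cos(\pi/m)\,\CF_m(H)\leq \KK_m(H)\leq \CF_m(H)$, yields \eqref{eq:realcomplexLCAm} verbatim. The degenerate small-$m$ subcases $m=2,3$, handled in Proposition \ref{p:realpd}(v), are consistent with the bound: there $\CF_G(\Om,z)=\KK_G(\Om,z)=1$, so both inequalities in \eqref{eq:realcomplexLCAm} are satisfied (the factor $\cos(\pi/2)=0$ makes the lower bound vacuous, and $\cos(\pi/3)=1/2$ trivially satisfies $1/2\leq 1$).

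Next I would handle the torsion-free case $o(z)=\infty$. With $H:=H(\Om,z)\subset\ZZ$, Theorem \ref{th:KreneditsL} packages all four LCA quantities $\CF_G^c,\CF_G^\#,\KK_G^c,\KK_G^\#$ at $(\Om,z)$ into the single common value $\CF(H)$, where the equality of the four $\ZZ$-side quantities $\CF^c(H)=\CF^\#(H)=\KK^c(H)=\KK^\#(H)=:\CF(H)$ has in turn been secured by Propositions \ref{p:realpd}(ii) and \ref{p:finiteinfinite}. Hence $\KK_G(\Om,z)=\CF_G(\Om,z)=\CF(H)$, which is \eqref{eq:realcomplexLCA}. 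Interpreting $\cos(\pi/m)\to 1$ as $m\to\infty$, this is exactly the $m=\infty$ reading of \eqref{eq:realcomplexLCAm}.

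There is no real obstacle: once the two main theorems are available, the corollary is mere substitution, and all the substantive analytic content (the Fej\'er--Riesz-based convolution square argument in Lemma \ref{l:FR}, the character twisting of Proposition \ref{p:realpd}(ii)--(iii), and the Fej\'er-kernel regularization of Proposition \ref{p:finiteinfinite}) has already been deployed. If anything, the only stylistic choice is whether to separate the two cases as above or to state a unified $m\in\NN\cup\{\infty\}$ result and verify the boundary value $\cos(\pi/\infty)=1$ by continuity; I would present the cases separately to match the phrasing \eqref{eq:realcomplexLCAm} and \eqref{eq:realcomplexLCA} already chosen in the statement of the corollary.
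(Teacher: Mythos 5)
Your proposal is correct and takes essentially the same route as the paper: the corollary is presented there precisely as an immediate consequence of Theorems \ref{th:KreneditsL} and \ref{th:KreneditsFini} combined with the real-versus-complex comparison of Proposition \ref{p:realpd} (ii)--(iii), which is exactly the substitution you carry out in the two cases $o(z)=m<\infty$ and $o(z)=\infty$.
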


Let us recall that when ${\mathcal M}(H)$ or $\CF(H)$ is
known for a certain $H\subset \ZZ$, then further cases can be
obtained via the following duality result.
\begin{lemma}
\label{dualitylemma} {\bf (see \cite{R1}).} Let $H\subseteq\ZZ$
be arbitrary with $\{-1,0,1\}\subset H$. Then denoting
$H^{\star}:=(\NN\setminus H)\cup \{-1,0,1\}$ we have $ \MM(H)
\MM(H^{\star})=2$.
\end{lemma}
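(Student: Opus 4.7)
The plan is to prove the duality $\MM(H)\MM(H^*)=2$ by establishing the two inequalities separately, in each case by matching a near-extremizer of one problem with a companion polynomial for the dual problem. Write $M:=\MM(H)$, $M^*:=\MM(H^*)$, and set $H^+:=H\cap\NN$ and $H^{*+}:=H^*\cap\NN=(\NN\setminus H^+)\cup\{1\}$; then $H^+\cap H^{*+}=\{1\}$ and $H^+\cup H^{*+}=\NN$, so the two positive-integer supports partition $\NN$ except at the shared index $1$. Since $\MM(H)$ only depends on $H^+$, it suffices to argue at the level of these sets.

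For the lower bound $MM^*\ge 2$, I would fix $\ve>0$ and pick an admissible $T(t)=1+\sum_{n\in H^+}a(n)\cos(2\pi nt)\ge 0$ with $a(1)\ge M-\ve$. By the Fej\'er--Riesz factorization (Lemma \ref{l:FR}(i) applied to the finitely supported positive definite sequence $\widehat T$), write $T=|P|^2$ for an algebraic polynomial $P$ in $z:=e^{2\pi it}$. The companion polynomial is then built in the form
\[
T^*(t):=c\,\bigl|1-e^{2\pi it}\bigr|^2\,\bigl|Q(e^{2\pi it})\bigr|^2,
\]
where $Q$ is an auxiliary polynomial and $c>0$ a normalization making $\widehat{T^*}(0)=1$; the elementary positive factor $|1-z|^2=2-2\cos(2\pi t)$ is precisely what supplies the $\cos(2\pi t)$ contribution of the required size $2/M$, while $Q$ is chosen so that the cosine coefficients of $T^*$ vanish on $H^+\setminus\{1\}$. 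Verifying the support condition $\supp\widehat{T^*}\subset H^{*+}\cup\{0\}$ and the value $a^*(1)=2/M$ then produces a valid competitor for the $\MM(H^*)$ problem, giving $M^*\ge 2/M-O(\ve)$; letting $\ve\to 0$ yields $MM^*\ge 2$.

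For the matching upper bound I would observe that $H\mapsto H^*$ is an involution on subsets of $\ZZ$ containing $\{-1,0,1\}$: the condition $\{-1,0,1\}\subset H^*$ is automatic, and a direct computation shows $(H^*)^{*+}=(\NN\setminus H^{*+})\cup\{1\}=H^+$. Applying the identical construction to a near-extremizer of $\MM(H^*)$ thus yields $M\ge 2/M^*$; combining with the lower bound gives $MM^*=2$.

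The main obstacle is the explicit construction of the auxiliary polynomial $Q$: it must be arranged so that the convolution structure of $|1-z|^2\,|Q(z)|^2$ produces exactly the required cancellations, killing all cosine coefficients at frequencies $n\in H^+\setminus\{1\}$ while preserving the value $2/M$ at frequency $1$. This is where the hypothesis $\{-1,0,1\}\subset H$ is essential, because it is what makes the "extra" index $1$ introduced by the $|1-z|^2$ factor legal on both sides of the duality. The algebraic bookkeeping linking $Q$ to the Fej\'er--Riesz factor $P$ of $T$ is elementary in principle but somewhat delicate, and is carried out in \cite{R1}.
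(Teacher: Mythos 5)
There is a genuine logical gap: the two halves of your argument prove the same inequality, so the upper bound $\MM(H)\MM(H^{\star})\le 2$ is never established. Your construction applied to a near-extremizer for $H$ gives $\MM(H^{\star})\ge 2/\MM(H)$, i.e. $\MM(H)\MM(H^{\star})\ge 2$; applying the identical construction with the roles of $H$ and $H^{\star}$ exchanged gives $\MM(H)\ge 2/\MM(H^{\star})$, which is again exactly $\MM(H)\MM(H^{\star})\ge 2$. The involution $H\mapsto H^{\star}$ cannot convert a lower bound into an upper bound. The missing half needs a different mechanism, for instance pairing two \emph{arbitrary} admissible polynomials $T$ for $H$ and $T^{\star}$ for $H^{\star}$ and exploiting that their cosine spectra meet only at frequency $1$: since $T\ge 0$ and $T^{\star}(\cdot+\tfrac12)\ge 0$, one gets $0\le \int_\TT T(t)\,T^{\star}(t+\tfrac12)\,dt = 1-\tfrac12 a(1)a^{\star}(1)$, hence $a(1)a^{\star}(1)\le 2$ for every admissible pair and therefore $\MM(H)\MM(H^{\star})\le 2$. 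Nothing of this sort appears in your sketch.

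Moreover, the half you do address is not actually proved: the auxiliary polynomial $Q$, which is the entire content of the construction, is never produced — you defer it to \cite{R1}, which is also all the paper itself does (the lemma is quoted there without proof, so there is no in-paper argument to compare against). Note also a sign problem in the sketch as written: $|1-e^{2\pi i t}|^2=2-2\cos(2\pi t)$ contributes a \emph{negative} coefficient at frequency $1$, so this factor by itself pushes $a^{\star}(1)$ the wrong way; the positive value $2/\MM(H)$ would have to emerge from the interaction with $Q$ (or from a half-period shift), and verifying simultaneously that the coefficients vanish on $H^{+}\setminus\{1\}$ is precisely the nontrivial step. As it stands the proposal establishes neither inequality completely, and only the inequality $\ge$ is even addressed.
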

\begin{remark} By Proposition \ref{p:realpd} (i), it is equivalently
formulated as $\KK(H)\KK(H^{\star})=\dfrac12$.
\end{remark}
\begin{remark}
The analogous finite dimensional duality relation in $\ZZ_m$ is
much easier, essentially trivial to obtain along the lines of
\cite{R1}. It gives with $H_m^{\star}:=(\ZZ_m\setminus H)\cup
\{-1,0,1\}$
$$
\MM_m(H) \MM_m(H^{\star})=2 \qquad {\rm or, ~ equivalently} \qquad
\KK_m(H)\KK_m(H^{\star})=\frac12,
$$
taking into account Proposition \ref{p:realpd} (i) again.
\end{remark}

It is explained in \cite{kolountzakis:pointwise} in the context
of the groups $\RR^d$ and $\TT^d$ that description of the
$\CF_G(\Om,z)$ problems by the Carath\'eodory-Fej\'er type
extremal problems on $\ZZ$ or $\ZZ_m$ automatically extends
these duality results to the more general situation. Regarding Problem
\ref{pr:ptwgeneral} we have
\begin{corollary}
For any LCA group $G$, open set $\Omega\subseteq G$ and
$z\in\Omega$ we have $\KK_G(\Om)\KK_G(\Om^{\star})=\frac12$ where
$\Om^*$ is any symmetric open set with $Z \cap \Om \cap \Om^* =
\{0,z,-z\}$ and $(\Om \cup \Om^*) \supset Z$, with $Z:=\langle
z \rangle $, i.e. $\{kz ~:~ k\in \ZZ\}$ or $\{kz ~:~ k\in
\ZZ_m\}$, respectively.
\end{corollary}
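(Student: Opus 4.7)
The plan is to reduce the claim on the general LCA group $G$ to the already-established duality results on $\ZZ$ (Lemma \ref{dualitylemma}) and on $\ZZ_m$ (its Remark counterpart), using the main theorems of the paper as translation devices.

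First I would split into two cases according to the order of $z$. In the case $o(z)=\infty$, Theorem \ref{th:KreneditsL} together with Corollary \ref{c:zinftyalleq} gives $\KK_G(\Om,z)=\KK(H)$ with $H:=H(\Om,z)=\{k\in\ZZ:kz\in\Om\}$, and similarly $\KK_G(\Om^\star,z)=\KK(H')$ with $H':=H(\Om^\star,z)$. Since $k\mapsto kz$ is a group isomorphism $\ZZ\to Z$ (injectivity being ensured by $o(z)=\infty$), the hypotheses $Z\cap\Om\cap\Om^\star=\{0,\pm z\}$ and $Z\subset\Om\cup\Om^\star$ pull back to $H\cap H'=\{-1,0,1\}$ and $H\cup H'=\ZZ$. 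Combined with the symmetry of $H$ and $H'$ inherited from the symmetry of $\Om$ and $\Om^\star$, this is precisely the relation $H'=H^\star$ of Lemma \ref{dualitylemma}, and the product formula $\KK(H)\KK(H^\star)=1/2$ from the Remark following that lemma concludes this case.

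The case $o(z)=m<\infty$ is handled in parallel: Theorem \ref{th:KreneditsFini} and Corollary \ref{c:zfiniteeq} yield $\KK_G(\Om,z)=\KK_m(H_m)$ and $\KK_G(\Om^\star,z)=\KK_m(H_m')$, where $H_m:=H_m(\Om,z)$ and $H_m':=H_m(\Om^\star,z)$. Using that $k\mapsto kz$ induces a group isomorphism $\ZZ_m\to Z$, the hypotheses translate to $H_m\cap H_m'=\{-1,0,1\}\subset\ZZ_m$ and $H_m\cup H_m'=\ZZ_m$, so $H_m'$ is the finite-group analogue $H_m^\star$. The finite duality relation $\KK_m(H_m)\KK_m(H_m^\star)=1/2$ recorded in the Remark then closes the argument.

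The only point I would be careful about (the ``main obstacle,'' if one may call it that) is the verification of the translation between the set-theoretic hypotheses on $\Om,\Om^\star$ in $G$ and the corresponding conditions on $H,H'$ in $\ZZ$ or $\ZZ_m$. In particular, one should confirm that symmetry of $\Om$ and $\Om^\star$ forces $H$ and $H'$ to be symmetric (so that the duality lemma applies in its stated form), and that $Z\subset\Om\cup\Om^\star$ is genuinely equivalent to $H\cup H'$ filling all of $\ZZ$ (respectively $\ZZ_m$) -- which relies, in the infinite-order case, on the injectivity of $k\mapsto kz$, and in the finite-order case, on the bijectivity of the induced map $\ZZ_m\to Z$. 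Once these bookkeeping identifications are made, the corollary falls out immediately from the main theorems and the known duality.
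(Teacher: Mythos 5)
Your proposal is correct and is essentially the paper's own (implicit) argument: the paper derives this corollary exactly by using Theorems \ref{th:KreneditsL} and \ref{th:KreneditsFini} (with Corollaries \ref{c:zinftyalleq} and \ref{c:zfiniteeq}) to replace $\KK_G(\Om,z)$ and $\KK_G(\Om^\star,z)$ by $\KK(H)$, $\KK(H^\star)$ or $\KK_m(H_m)$, $\KK_m(H_m^\star)$, and then invoking Lemma \ref{dualitylemma} and its $\ZZ_m$ analogue. Your care about the bookkeeping (symmetry of $H$, $H'$ and the pullback of the conditions on $\Om\cap\Om^\star$ and $\Om\cup\Om^\star$ under $k\mapsto kz$) is exactly the right point to check, and it goes through as you describe.
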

\begin{remark}\label{rem:cplxdualitymayfail} When
$\CF_G(\Om,z)=\KK_G(\Om,z)$ and
$\CF_G(\Om^{\star},z)=\KK_G(\Om^{\star},z)$--in particular when
$o(z)=\infty$--then we have the analogous formula
$\CF_G(\Om,z)\CF_G(\Om^{\star},z)=2$ for the complex
quantities. However, it fails whenever
$\CF_G(\Om,z)=\KK_G(\Om,z)$ or
$\CF_G(\Om^{\star},z)=\KK_G(\Om^{\star},z)$ does so.
\end{remark}

This of course covers the corresponding results for $\RR^d$ and
$\TT^d$ given in \cite[Corollary 4.7]{kolountzakis:pointwise}.

It would be interesting -- perhaps by a direct argument
extending that in \cite{R1} -- to derive this duality result
without relying on Theorems \ref{th:KreneditsFini} and
\ref{th:KreneditsL}.




\end{document}